 \newtheorem{theorem}{Theorem}[section]
 \newtheorem{lemma}[theorem]{Lemma}
 \theoremstyle{definition}
 \newtheorem{definition}[theorem]{Definition}
 \newtheorem{remark}[theorem]{Remark}
\DeclarePairedDelimiter\floor{\lfloor}{\rfloor}
\newcommand{\T}{\mathcal{T}}
\newcommand{\Loop}{\mathsf{Loop}}
\newcommand{\Height}{\mathsf{Height}}
\renewcommand{\deg}{\mathsf{deg}}
\newcommand{\out}{\mathsf{out}}
\newcommand{\sd}{\mathrm{\mathbf{SD}}}
\newcommand{\eqd}{ \,{\buildrel d \over =}\ }
\newcommand{\cstar}{\star}
\newcommand{\csharp}{\#}
\newcommand{\cond}{C}
\newcommand{\GW}{\operatorname{GW}}
\newcommand{\diam}{\operatorname{diam}}
\title[On scaling limits of random Halin-like maps]{On scaling limits of random Halin-like maps}
\author[D. Amankwah]{Daniel Amankwah}
\address{Mathematics Division, Science Institute, University of Iceland, Dunhaga 5, 107 Reykjav\'ik, Iceland.} \email{daa33@hi.is}
\author[S.\"O. Stef\'ansson]{Sigurdur \"Orn Stef\'ansson}
\email{sigurdur@hi.is}
\date{\today}
\begin{document}

\begin{abstract}
We consider maps which are constructed from plane trees by assigning marks to the corners of each vertex and then connecting each pair of consecutive marks on their contour by a single edge.  A measure is defined on the set of such maps by assigning Boltzmann weights to the faces. When every vertex has exactly one marked corner, these maps are dissections of a polygon which are bijectively related to non-crossing trees. When every vertex has at least one marked corner, the maps are outerplanar and each of its two-connected component is bijectively related to a non-crossing tree. We study the scaling limits of the maps under these conditions and establish that for certain choices of the weights the scaling limits are either the Brownian CRT or the $\alpha$-stable looptrees of Curien and Kortchemski.

\end{abstract}

\maketitle

\section{Introduction}\label{sec:intro}

A planar map is an embedding of a planar graph onto a 2-dimensional sphere up to continuous deformations. We refer to a planar map as rooted if it has a distinguished and oriented edge. Rooting a planar map makes it easier to handle combinatorially since it eliminates symmetry issues. Also, the root gives a starting point for recursive decomposition. The tail of the root edge will be called the root vertex. The connected components of the complement of a planar map are called faces. 
If $f$ is a face of a planar map $M$, we will denote its degree (the number of edge sides that surround it, counted with multiplicity) by $\deg_M(f)$ or just $\deg(f)$ when there is no ambiguity. A rooted map with a single face is called a plane tree.

Research into asymptotic behavior of random planar maps has been active in the last two decades. This has been motivated in part by the connection with two-dimensional Liouville quantum gravity(see \cite{LeGall2,Miermont} together with references for detailed account). Much of the advances in this field have been made possible by the use of bijections between specific classes of planar maps and certain plane trees. See for example \cite{CoriVau,Schaef,BouFranGui}. This is due to the fact that plane trees are easier to analyse since they are coded by functions on which one can apply the standard tools from analysis and probability theory. In \cite{Aldous1,Aldous2,Aldous3}, Aldous gives an exposition on the scaling limit of various classes of discrete plane trees conditioned to be large. For a Galton-Watson tree conditioned to be large, whose offspring distribution has mean $1$ and a finite variance, he showed that the scaling limit, in the Gromov-Hausdorff sense (see Section \ref{ss:GH}) is a continuous random tree called the Brownian continuum random tree (CRT). The CRT is often denoted by $\T_{\mathbf{e}}$ where $\mathbf{e}$ is a standard Brownian excursion of duration 1 from which the tree may be constructed. It admits a universality property in the sense that it is the limit of several classes of trees and maps including dissections of polygons and outerplanar maps, which are of interest in this work. A dissection of a polygon is a two-connected planar map, all of whose vertices lie on the boundary of the unbounded face. Outerplanar maps are maps whose two-connected components are dissections. Curien, Haas and Kortchemski \cite{randissection} established such limit theorems in the case of random dissections and this was first shown by Caraceni \cite{caraceni} in the case of uniform random outerplanar maps and later generalized by Stufler \cite{stufler:2020}. 

Generalizations to Galton-Watson trees whose offspring disribution is in the domain of attraction of a stable law with index $\alpha\in(1,2]$ were given by Duquesne and Le Gall (see \cite{DuquesneGW,DuqGalllevypro}). They showed that these trees, with a properly rescaled graph distance admit a weak limit which is referred to as the $\alpha$-stable tree, $\mathcal{T}_\alpha$. In the case $\alpha=2$ we recover the Brownian CRT, more precisely $\T_2 = \sqrt{2}\T_\mathbf{e}$. The $\alpha$-stable trees are qualitatively different from the Brownian CRT when $\alpha < 2$, since then they possess a countably infinite number of vertices of infinite degree. 

Curien and Kortchemski introduced the study of a family of random compact metric spaces $(\mathcal{L}_{\alpha})_{1<\alpha<2}$ called $\alpha$-stable looptrees (see  \cite{CurienKortchloop}). On the discrete level, looptrees are graphs constructed on the vertex set of a plane tree by connecting each vertex to its leftmost child by an edge, connecting adjacent siblings by edges, and finally connecting the rightmost child to its parent by an edge (see Fig.~\ref{looptree}).  The space $\mathcal{L}_\alpha$ may be informally described in the same spirit by replacing each vertex of large degree in the $\alpha$-stable tree $\mathcal{T}_\alpha$, by a circle of length proportional to this degree.
\begin{figure}[ht]
	\centering
	
	\resizebox{0.5\textwidth}{!}{
	\includegraphics{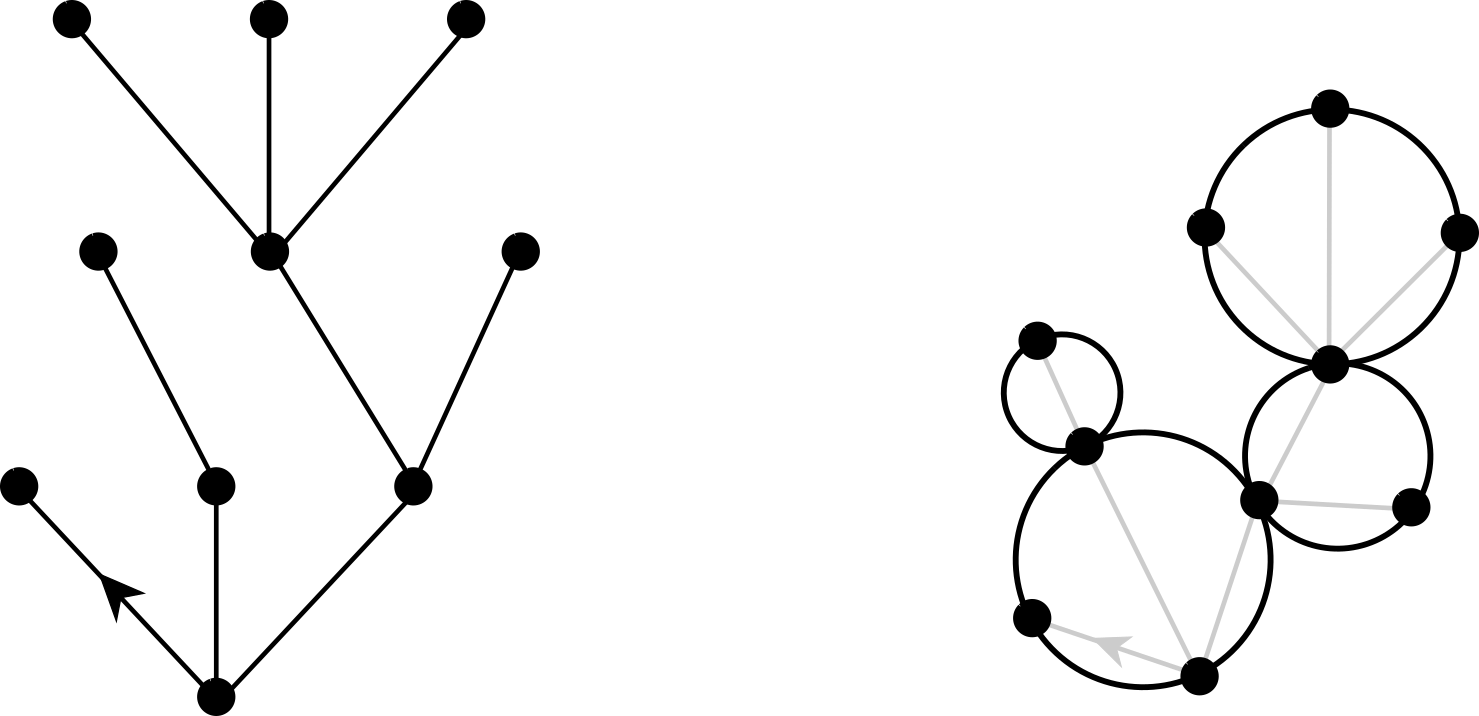}
		}  
	\caption{A looptree associated to a plane tree.}
	\label{looptree}
\end{figure}     
Curien and Kortchemski gave sufficient conditions for which a random sequence of discrete looptrees converges in distribution to the stable looptree. As an application they showed that the random dissection of a polygon sampled according to a Boltzmann distribution on the face degrees obeying the relevant conditions, converges, once properly rescaled, to the stable looptree for the Gromov-Hausdorff topology. Furthermore, Stefánsson and Stufler used this invariance principle to show that under certain conditions, properly rescaled Boltzmann face-weighted outerplanar maps also converge to stable looptrees \cite{stefansson:2019}. 

These examples have the common feature that the objects involved are tree-like and when their faces are forced to have large degrees, in the limit, they appear as circles arranged along a tree structure. We are interested in understanding how generic this behavior is and provide in this paper additional examples of families of maps which share this feature of being tree-like and converging to the Brownian CRT or $\alpha$-stable looptrees.

\subsection{Halin-like maps}

We will assume (using the stereographic projection) that our planar maps are drawn in the infinite plane. The maps then have exactly one unbounded face which will be denoted by $f_\infty$. The  maps we consider in this work are defined as follows:
 Start with a rooted plane tree and assign marks to some of the corners around its vertices. Denote this marked tree by $H^\circ$. Then go around the contour of the tree and connect each marked corner with an edge to the next marked corner on the contour. Denote the resulting map by $H$. We call $H^\circ$ the \emph{skeleton} of $H$. We use the convention that the root of $H$ lies on the boundary of the unbounded face and is the directed edge from the first to the second marked corner encountered in a clockwise exploration around $H^\circ$, starting from the root vertex. The face containing the root edge is called the root face and is denoted by $f_0$. 
 
 In the special case when the skeleton $H^\circ$ has at least four edges and no vertex of degree two, and all its leaves but no other vertices receive a mark, the corresponding graph $H$ has been referred to as a Halin graph in the literature, see e.g.~\cite{han} and Fig.~\ref{fig:actualhalin}. 
  \begin{figure}[ht]
 	\centering
 	\resizebox{0.75\textwidth}{!}{
 		\includegraphics[width=30cm]{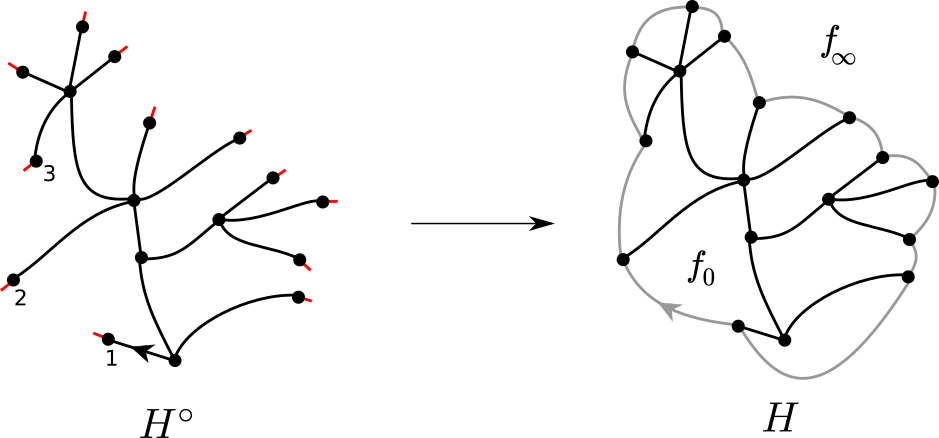}
 	}  
 	\caption{The skeleton tree $H^\circ$ has no vertices of degree two and each leaf has a mark but not other vertices. The corresponding map $H$ is then a Halin graph. The marks on corners in $H^\circ$ are indicated by red line segments and the root edges are indciated by arrows. \label{fig:actualhalin}}
 \end{figure}
 Due to this connection we refer to the maps considered in the current work as \emph{Halin-like maps}. There are other models which bear similarity with Halin-like maps such as graphs with a given surplus of edges which appear as components in the critical Erdös-Rényi random graph \cite{BerryBroutinGoldschmidt} and unicellular maps, which are generalizations of trees on surfaces of arbitrary genus, see e.g.~\cite{Chapuy}.
 
 In the current work we will consider separately two special cases in which we put the following assumption on the Halin-like maps which are best described as conditions on the marks of the skeleton tree:

\medskip
\begin{center}
	\framebox{
	\begin{minipage}{0.9\linewidth}
	($\cond^{\cstar}$) \quad  Each vertex in the skeleton tree $H^\circ$ of the  map $H$ has \textbf{exactly} one marked corner. The first corner counterclockwise from the root edge is marked.
\end{minipage}}
\end{center}
\medskip
\begin{center}
	\framebox{
		\begin{minipage}{0.9\linewidth}
			($\cond^{\csharp}$) \quad  Each vertex in the skeleton tree $H^\circ$ of the map $H$ has \textbf{at least} one marked corner. The first corner counterclockwise from the root edge is marked.
	\end{minipage}}
\end{center}

\medskip

Under condition $\cond^\cstar$, the skeleton tree $H^\circ$ is a so-called \emph{non-crossing tree} (NCT), see e.g.~\cite{Noncrossing} in the context of scaling limits. The corresponding map $H$ is then a dissection of a polygon which we will refer to as a \emph{non-crossing dissection} (NCD), see Fig.~\ref{fig:contractionstar}. Under condition $\cond^\csharp$, the map $H$ is an outerplanar map whose blocks are NCDs. We will refer to such an outerplanar map as a non-crossing outerplanar map (NCO), see Fig.~\ref{fig:HalGrEx1}.

\begin{figure}[ht]
	\centering
	\resizebox{0.75\textwidth}{!}{
		\includegraphics[width=30cm]{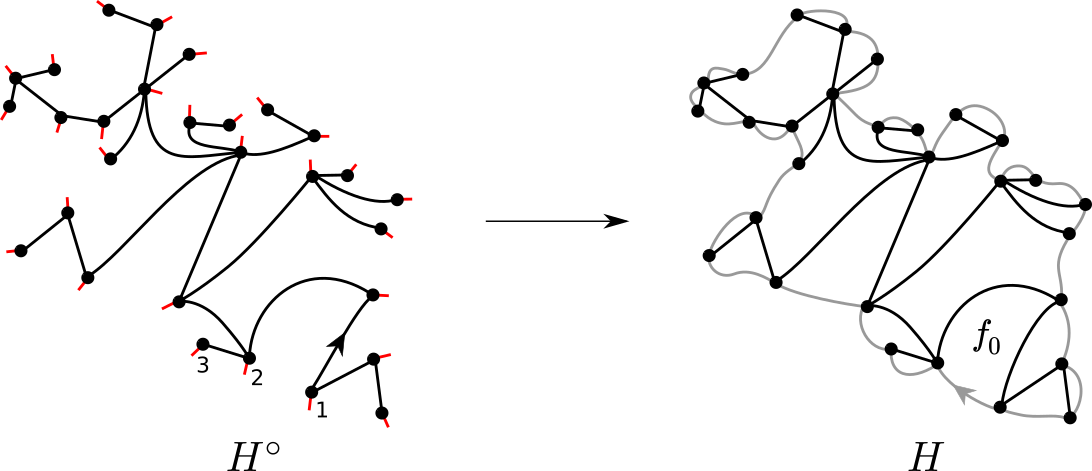}
	}  
	\caption{The skeleton tree $H^\circ$ satisfying condition $\cond^\cstar$ and its corresponding map $H$. The tree is a non-crossing tree and the map is a non-crossing dissection.  \label{fig:contractionstar}}
\end{figure}

\begin{figure}[ht]
	\centering
	\resizebox{.75\textwidth}{!}{
		\includegraphics[width=30cm]{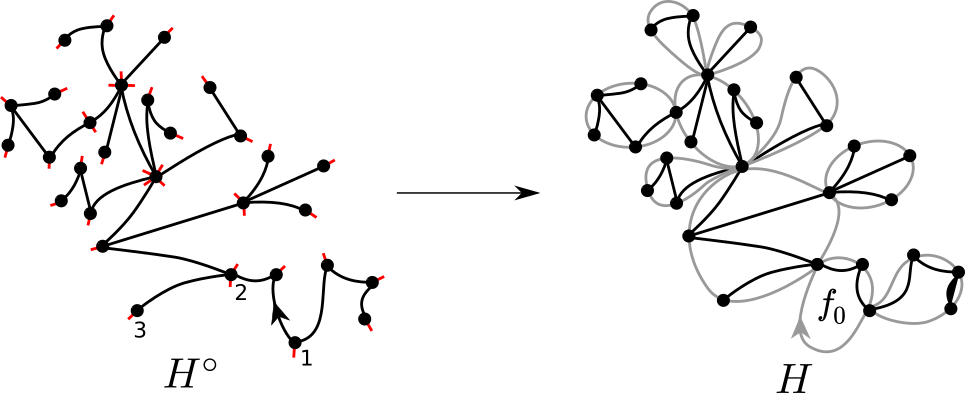}
	}  
	\caption{ The skeleton tree $H^\circ$ satisfying condition $\cond^\csharp$ and its corresponding map $H$ which is a non-crossing outerplanar map. \label{fig:HalGrEx1}}
\end{figure}

\subsection{The main result}\label{sec:result_intro}
For every integer $n\geq 1$, denote by $\mathbb{H}^{\ast}_{n}$ the set of Halin-like maps satisfying $(\cond^\ast)$,  $\ast \in \{\cstar,\csharp\}$ and having $n$ bounded faces. When it is clear under which assumption we are working, we will leave out the $\ast$ from the notation.  Let $H \in \mathbb{H}_n^\ast$  and fix a sequence $w=(w(k):k \geq 2)$ of non-negative \emph{face weights}. We define the weight of the map $H$ by 

\begin{equation*}
W(H) = \prod_{\substack{f \; \text{face of} \; H \\ f\neq f_\infty}}w(\deg(f))
\end{equation*} 
and the normalization
\begin{equation*}
Z_{n}^\ast = \sum_{H' \in \mathbb{H}^\ast_{n}}W(H').
\end{equation*}
If $Z^\ast_{n}>0$, we define a random element $\mathcal{H}_n^{w,\ast}$ in $\mathbb{H}^\ast_n$ by
\begin{equation}
\mathbb{P}(\mathcal{H}_n^{w,\ast} = H) = \frac{1}{Z^\ast_{n}} W(H) \hspace{1cm} \text{for all} \; H \in \mathbb{H}^\ast_{n}.
\end{equation}
 We view $\mathcal{H}_{n}^{w,\ast}$ as a random metric space equipped with the graph distance as its metric (where every edge of $\mathcal{H}_{n}^{w,\ast}$ has unit length).\\

It will prove convenient to define the two measures $\mu$ and $\mu_\varnothing$ on $\mathbb{Z}_+ = \{0,1,2,\ldots\}$ by
 \begin{align} \label{eq:mus}
	\mu(k) = a(k+1)w(k+2)b^k, ~k\geq 0 ~ \text{and}~ \mu_\varnothing(k) = c w(k+1) b^k,~¨ k\geq 1
\end{align}
where $a,b,c > 0$. We will assume throughout this paper that the generating function of the sequence $w$ has a non-zero radius of convergence
\begin{align*}
\rho_w = \left(\limsup_{k\to \infty} w(k)^{1/k})\right)^{-1}.
\end{align*} 
This will allow us to define for each $b \in (0,\rho_w)$
\begin{align} \label{eq:normalization}
a = \left(\sum_{k=0}^\infty (k+1)w(k+2)b^k\right)^{-1} \qquad \text{and} \qquad c =  \left(\sum_{k=1}^\infty w(k+1)b^k\right)^{-1}
\end{align}
so that $\mu$ and $\mu_\varnothing$ are probability measures. The motivation for defining the measures $\mu$ and $\mu_\varnothing$ is that they are equivalent (in the sense of Equation \eqref{eq:tilt1}) to offspring distributions of non-root vertices and the root resp.~in the weak dual tree of the Halin-like map under condition $(\cond^\cstar$). This will allow us to use the theory of branching processes to understand the distribution of the maps.  The shift by 2 in $\mu$ and 1 in $\mu_\varnothing$ is due to the deletion of 1 edge when taking the weak dual and substraction of 1 for non-root vertices since we consider only the number of offspring. The factor $k+1$ in $\mu$ comes from the fact that we can choose a location for the edge which was deleted when taking the weak dual in $k+1$ different ways. 

When $\mu$ and $\mu_\varnothing$ are probability measures, denote their mean and variance by $m_\mu$ and $\sigma_\mu^2$ respectively.  In order to guarantee that $Z_n^\cstar > 0$ for sufficiently large $n$ we assume that $\mu(k)$ is aperiodic. The general case may also be considered but the results are essentially the same and we aim to keep the notation simple.


The main results of the current work are the following four scaling limit theorems for $\mathcal{H}_{n}^{w, \ast}$. Theorems \ref{mainresultCRT1} and \ref{mainresultCRT2} show that if the faces are not forced to be too large,  $n^{-1/2}\cdot \mathcal{H}_{n}^{w,\ast}$ converges to a constant multiple (depending on $\ast$) of the Brownian CRT $\T_\mathbf{e}$. The notation $s \cdot \mathcal{H}_{n}^{w}$ represents the metric space obtained from $\mathcal{H}_{n}^{w}$ by multiplying all distances by $s>0$. Theorems \ref{mainresult1} and \ref{mainresult2} however show that it is possible to choose the weights $w$ such that faces in the maps become large when $n$ is large and the maps, properly rescaled, converge in distribution towards stable looptrees. We use techniques from four different references in the proof of the theorems. Note that the convergence is understood as the weak convergence of measures in the Gromov-Hausdorff topology on the space of compact metric spaces (see Section \ref{ss:GH} for the definition).

In the next two Theorems we assume that condition $(\cond^\cstar)$ holds and that $\mu$ and $\mu_\varnothing$ are probability measures on $\mathbb{Z}_{+}$ defined by \eqref{eq:mus} and \eqref{eq:normalization}.

\begin{theorem}\label{mainresultCRT1}
		Assume condition $(\cond^\cstar)$. If there is a $b \in (0,\rho_w]$ and a $\lambda > 0$ such that $m_\mu= 1$ and $\sum_{k=0}^\infty e^{\lambda k}\mu(k) < \infty$ then there is a constant $c^\cstar(\mu)>0$ such that 
		\begin{align*}
		n^{-1/2} \cdot  \mathcal{H}_{n}^{w,\cstar}~\xrightarrow[n \rightarrow \infty]{\enskip (d) \enskip}~c^\cstar(\mu)\cdot \mathcal{T}_\mathbf{e}.
		\end{align*}
\end{theorem}
 The proof, which is given in Section \ref{ss:proof1}, follows from the Markov-chain argument developed in \cite{randissection}, which shows that the ratio of lengths of geoedesics in large NCTs and the corresponding paths in their weak dual tree is given by
 \begin{align*}
c_\mathrm{geo}(\mu) &= \frac{1}{4}\left(\sigma_\mu^2 + 1 + \frac{\gamma(2\mathbb{Z}_++1)(1-2\gamma(\mathbb{Z}))}{1-\gamma(2\mathbb{Z}_+)+\gamma(2\mathbb{Z}_+ +1)}\right)
 \end{align*}
where  $\gamma(j) = \frac{j\mu(j)}{j+1}$ and for any $A\subseteq \mathbb{Z}_+ = \{0,1,\ldots\}$,
$\gamma(A) = \sum_{j\in A} \gamma(j).$
 By the results of Aldous,  the weak dual tree of $\mathcal{H}_n^{w,\star}$, rescaled by $n^{-1/2}$, converges towards a multiple $c_{\mathrm{tree}}(\mu) := 2\sigma_\mu^{-1}$ of the Brownian CRT. Combining these results yields $c^\cstar(\mu) = c_{\mathrm{tree}}(\mu)\cdot c_{\mathrm{geo}}(\mu).$

It should be possible to relax the exponential moment condition on $\mu$ and only assume finite variance, see e.g.~\cite{kortchemskirichier}, but we do not pursue this further.

\begin{theorem}\label{mainresult1}
		Assume condition $(\cond^\cstar)$. If there is a $b \in (0,\rho_w]$ such that $m_\mu= 1$ and $\mu$ belongs to the domain of attraction of an $\alpha$-stable law then there is a sequence of positive numbers $b_n\to \infty$ such that
	$$b_n^{-1}\cdot \mathcal{H}_{n}^{w,\cstar}~\xrightarrow[n \rightarrow \infty]{\enskip (d) \enskip}~\mathcal{L}_{\alpha}.$$
\end{theorem}
A probability measure $\mu$ is said to be in the domain of attraction of an $\alpha$-stable law, with $\alpha\in(1,2)$, if $$\mu([j,\infty))=j^{-\alpha}L(j),$$ where $L:\mathbb{R}_{+}\rightarrow \mathbb{R}_{+}$ is a slowly varying function, i.e.~$L(x)>0$ and $\lim_{x\rightarrow \infty}\frac{L(tx)}{L(x)}=1$ for all $t>0$ (see \cite{Slowly varying} for details). In this case, the variance of $\mu$ is infinite. Common examples of slowly varying functions are constant functions, powers of logarithms, and iterated logarithms. By \cite[Theorem 1.10]{Kortchinva}, it holds that 
\begin{align} \label{eq:bscaling}
	b_n = n^{1/\alpha} \Lambda(n)
\end{align}
where $\Lambda$ is a slowly varying function. An explicit example of a measure $\mu$ which satisfies the conditions of the theorem is 
\begin{align*}
	\mu(k) = ck^{-1-\alpha}, \qquad c>0
\end{align*}
in which case 
\begin{align*}
	b_n =  \frac{n^{\frac{1}{\alpha}}}{(c\vert\Gamma(-\alpha)\vert)^{\frac{1}{\alpha}}}.
\end{align*}
We refer the reader to e.g.~\cite{CurienKortchloop,Kortchinva} for more information on random variables in the domain of attraction of stable laws, in a relevant context. The proof of Theorem \ref{mainresult1}, which is given in Section \ref{ss:proof2}, follows directly from the invariance principle presented in \cite[Theorem 4.1]{CurienKortchloop}, along with an argument showing that the Gromov-Hausdorff distance between the NCD and the looptree of its weak dual tree is sufficiently small.

In the next two Theorems we assume that $\mu$ and $\mu_\varnothing$ are defined by \eqref{eq:mus}, but they are not necessarily probability measures. Before the statement of the Theorems we require some notation. Define the generating functions \begin{align*}
	 g^\mu(z) = \sum_{n=0}^\infty \mu(n) z^n \qquad \text{and}\qquad  g_\varnothing^\mu(z) = \sum_{n=0}^\infty \mu_\varnothing(n) z^n  
\end{align*}
and denote their radius of convergence by $\rho_\mu$. Define
\begin{align*}
	\nu_\mu = \lim_{x\nearrow \rho_\mu} \frac{x (g^\mu)'(x)}{g^\mu(x)}
\end{align*}
as the largest possible mean that $\mu$ can have, when $b$ is varied. 
Let \begin{align*}
	A_\mu = \rho_\mu \frac{g_\varnothing^\mu(\rho_\mu)}{g^\mu(\rho_\mu)} \qquad \text{and}\qquad B_\mu = \rho_\mu \frac{(g_\varnothing^\mu)'(\rho_\mu)}{g^\mu(\rho_\mu)}.
\end{align*}

\begin{theorem}\label{mainresultCRT2}
		Assume condition $(\cond^{\csharp})$. If one of the three cases
		\begin{align*}
		\begin{cases}
		\nu_\mu \geq 1, \\
		0 < \nu_\mu < 1, \rho_\mu \geq 1, \\
		0 < \nu_\mu < 1, \frac{A_\mu(1-\nu_\mu)+\rho_\mu B_\mu}{(1- A_\mu)(1-\nu_\mu)} > 1
		\end{cases}
		\end{align*}
		holds then there is a constant $c^{\csharp}>0$ such that 
		\begin{align*}
			n^{-1/2} \cdot  \mathcal{H}_{n}^{w,\csharp}~\xrightarrow[n \rightarrow \infty]{\enskip (d) \enskip}~c^{\csharp}\cdot \mathcal{T}_\mathbf{e}.
		\end{align*}
\end{theorem}
The proof, which is given in Section \ref{ss:proof3}, follows from Theorem 6.60 in \cite{stufler:2020} which holds for random decorated trees (or enriched trees), satisfying certain conditions. The NCO is viewed as a size conditioned Galton-Watson tree $\mathcal{T}$ decorated by face weighted finite sequences of NCDs. We do not provide an explicit formula for the constant $c^{\csharp}$, but it may be calculated in terms of expected distances in face weighted finite sequences of NCDs, see Section 7.11 in \cite{stufler:2020}. The formula for its value will depend on which of the above three cases holds.  

It will be apparent in the proof of Theorem \ref{mainresultCRT2} that the above conditions guarantee that the offspring distribution of $\mathcal{T}$ has finite exponential moments. We remark again that this exponential moment condition could possibly be relaxed to only assuming that the offspring distribution has finite variance. 

\begin{theorem}\label{mainresult2}
	Assume condition $(\cond^{\csharp})$. Fix $\alpha \in (1,2)$ and assume that 
	\begin{align} \label{muweights}
	\mu(k) = L(k) k^{-\alpha-1} r^{-k}
	\end{align}
	with $r = \rho_\mu <1$ and $L$ slowly varying. 	If $\nu_\mu < 1$ and
	\begin{align*}
		\frac{A_\mu(1-\nu_\mu)+\rho_\mu B_\mu}{(1-A_\mu)(1-\nu_\mu)} = 1
	\end{align*}
	Then 
	$$c_n^{-1}\cdot \mathcal{H}_{n}^{w,\csharp}~\xrightarrow[n \rightarrow \infty]{\enskip (d) \enskip}~\mathcal{L}_{\alpha}$$
	where
	\begin{align*}
		c_n = \left(\frac{|\Gamma(-\alpha)|g^\mu(r) B_\mu}{(1-\nu_\mu)(1-A_\mu)}\right)^{1/\alpha}(L_n n)^{1/\alpha}.
	\end{align*}
	
\end{theorem}
The proof, which is presented in Section \ref{ss:proof4} again involves representing $\mathcal{H}_{n}^{w,\csharp}$ as a size conditioned Galton-Watson tree, decorated by face weighted finite sequences of NCDs. The result then follows from similar arguments given explicitly for outerplanar maps, in \cite{stefansson:2019}. However, we will instead apply the recent more general invariance principle for decorated trees presented in \cite[Theorem 5.1]{decoratedtrees}.

\begin{remark} We have not been able to relax conditions  $(\cond^\cstar)$ and $(\cond^{\csharp})$ in the current work. If a vertex has for example no marked corners it will not be on the boundary of $f_\infty$ in the Halin-like map and we lose the connection to dissections and outer-planar maps. There are certain exceptions to this, e.g.~when $H$ is a Halin-map which satisfies that each internal vertex is connected to a leaf. Then only the leaves are marked and by contracting the edges containing the leaves one arrives again at a Halin-like map which is outerplanar. One may show that this contraction does not affect the Gromov-Hausdorff distance too much and thus one gets similar invariance principles as above. 
	\end {remark}


\section{Halin-like maps and equivalent representations by trees}\label{sec:Def_Not}
In this section we give formal definitions of plane trees and Halin-like maps and show that when they satisfy condition $(\cond^\cstar)$ or $(\cond^\csharp)$, they are equivalent to plane trees or enriched trees respectively. These connections will allow us to relate them to size conditioned Galton-Watson trees in a suitable sense which gives access to known invariance principles.

\subsection{Rooted plane trees and Halin maps}\label{sec:BoltzHalGr}

We briefly recall the formalism of rooted plane trees (or rooted ordered trees). Let $\mathbb{N}= \{1,2,...\}$ and define the set of finite sequences by 
\[ \mathcal{U}=\bigcup_{n=0}^{\infty} \mathbb{N}^{n},\]
 where by convention $\mathbb{N}^{0}=\{\varnothing\}$. An element of $\mathcal{U}$ is thus a sequence $u=(u^{1},...,u^{n})$ of elements of $\mathbb{N}$. 
 If $u=(u^{1},...,u^{m})$ and $v=(v^{1},...,v^{n})$ belong to $\mathcal{U}$, we write the concatenation of $u$ and $v$ as $uv=(u^{1},...,u^{m},v^{1},...,v^{n})$. \\
 
 A plane tree $T$ is a finite or infinite subset of $\mathcal{U}$ such that,
 \begin{enumerate}
 \item $\varnothing \in T$.
 \item if $ v \in T$ and $v=uj$ for some $j \in \mathbb{N}$ then $u \in T$.
 \item for every $u \in T $, there exists an integer $\out_T(u) \geq 0$ (the outdegree or number of children of $u$) such that for every $j \in \mathbb{N}$, $uj \in T$ if and only if $1 \leq j \leq \out_T(u)$.  
 \end{enumerate}
 The vertex $\varnothing$ is called the root vertex and the edge $\{\varnothing,(1)\}$ is called the root edge. The letter $T$ will sometimes be suppressed in $\out_T(u)$ when it is clear from the context which tree is being referred to. 
 Let us denote the set of all plane trees by $\textbf{T}$ and the set of plane trees with $n$ vertices by $\textbf{T}_{n}$. For $T \in \textbf{T}$, we will view each of its vertices as an individual of a population whose $T$ is the geneological tree. If $T \in \textbf{T}$ and $u \in T$, we define the shift of $T$ at $u$ by $\mathsf{shift}_{u}T =\{v \in \mathcal{U} : uv \in T \} $ which is itself a tree. The total progeny of $T$ (total number of vertices of $T$) will be denoted by $|T|$. The maximal generation of the tree $T$ is called its height and we denote it by $\Height(T)$.
 
Given a plane tree $T$ with $k$ edges, define the contour sequence of $T$ 
\begin{align*}
	c = (c_0,c_1,\ldots,c_{2k})
\end{align*}
as the sequence of vertices encountered when moving along the contour of $T$ in a clockwise direction, starting and ending at $c_0 = c_{2k} = \varnothing$. The contour sequence is in a one-to-one correspondence with the corners around the vertices in $T$, and we use the convention that the corner clockwise from the root vertex is $c_{0}$. 

We define marks on the corners of the tree $T$ by specifying a sequence $m = (m_0,\ldots,m_{2k})$ of 0's and 1's. The corner $c_i$ receives a mark if $m_i = 1$ but otherwise it does not. Let $|m| = \sum_i m_i$ denote the number of marks.

\begin{definition}\label{HalinDef}
Let $(T,m)$ be a rooted plane tree $T$ with corners marked by a sequence $m$. Let $v_1,\cdots,v_{|m|}$ be the sequence of marked corners in the same order as they appear in the contour sequence.  A Halin-like map $H$, is a map constructed by adding to $T$ the edges $\{v_i,v_{i+1}\}$, $1\leq i \leq |m|-1$, and the edge $\{v_{|m|},v_1\}$.  We refer to $(T,m)$ as the \emph{skeleton tree} of $H$ and use the notation $H^\circ := (T,m)$.  The root edge of $H$ is the directed edge $(v_{1},v_2)$ and the root face of $H$ is the bounded face containing the root edge. The tree $T$ is referred to as the shape of the marked tree $(T,m)$ and will be denoted by $\mathcal{S}(T,m) = T$. 
\end{definition} 
The Halin maps considered in this work are assumed to satisfy condition $(\cond^{\cstar})$ or $(\cond^{\csharp})$ in the introduction. The set of Halin maps with $n$ bounded faces, satisfying condition $C^\ast$, is denoted by $\mathbb{H}_n^\ast$, $\ast \in \{\cstar,\csharp \}$.

\subsection{Condition $(\cond^\cstar)$ and non-crossing trees}\label{sec:bij}
Under condition $(\cond^\cstar)$ the skeleton $H^\circ$ is a non-crossing tree and the corresponding map is a non-crossing dissection. We denote the set of non-crossing trees with $n$ vertices by $\mathbf{NCT}_n$ and the set of non-crossing dissections with $n$ vertices by $\mathbf{NCD}_n$.  There is another bijection between NCDs and NCTs which is defined by taking the weak dual of the NCDs: First, let $\hat H$ be the usual geometric dual and if $f$ is a face in $H$ let $\hat{f}$ be its corresponding vertex in the dual. 
The weak dual, denoted by $\bar H$, is obtained from $\hat H$ by removing from it the vertex $\hat{f}_\infty$ and all the edges $\{\cdot,\hat{f}_\infty\}$ containing it. Since $H$ is a dissection of a polygon, $\bar H$ is clearly a tree. We define its root edge as the leftmost edge emanating from the vertex $\hat f_0$ dual to the root face.  Moreover, each vertex in $\bar H$, has a unique corner from which an edge $\{\cdot,\hat{f}_\infty\}$ was removed and therefore we may view the weak dual as a non-crossing tree, see Fig.~\ref{fig:augmented}. We have thus arrived at the following
\begin{lemma} \label{l:firstbijection}
The function $\phi_n: \mathbb{H}^\cstar_n \to \textbf{NCT}_{n}$, $H \mapsto \bar H$, is a bijection. If $f$ is a face in $H \in \mathbb{H}^\cstar_n$ and $u$ the corresponding vertex in $\bar H$ then 
\begin{align*}
	\out_{\bar H}(u) = \begin{cases}
		\deg_{H}(f)-1	& \text{if}~u = \varnothing, \\
		\deg_{H}(f)-2	& \text{otherwise}.
	\end{cases}
\end{align*}
\end{lemma}
 The reason for the shift in degrees is that $\out(u)$ is an outdegree (accounting for $-1$ for each non-root vertex) and the deletion of edges when taking the weak dual (accounting for $-1$ for all vertices). The bijection $\phi_n$ furthermore defines a bijection between the set of dissections and trees of arbitrary finite sizes which we will denote by $\phi$. 
   \begin{figure}[ht]
 	\centering
 	\resizebox{1.\textwidth}{!}{
 		\includegraphics[width=0.15cm]{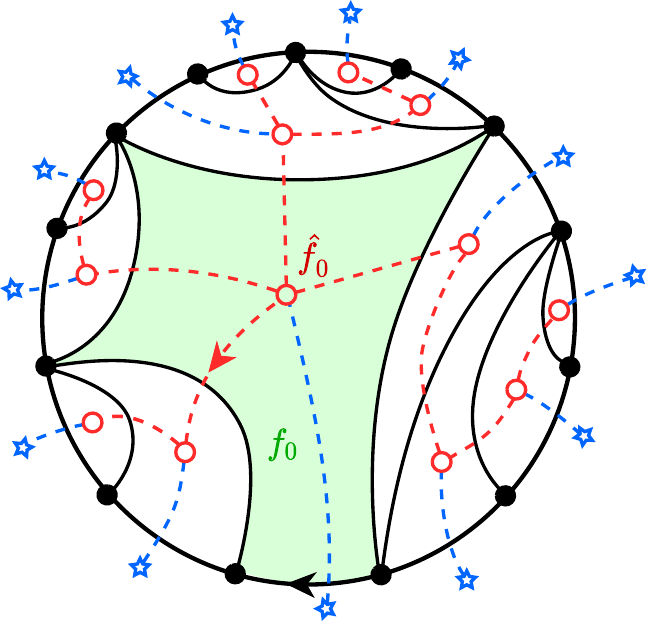}
 	}  
 	\caption{A NCD in black and its weak dual tree in dashed red. The root face is depicted in green. The augmented tree is obtained by adding the dashed blue edges.  \label{fig:augmented}}
 \end{figure}
 
 Later we will require a slightly modified version of the weak dual which is obtained by adding back to $\bar{H}$ the edges dual to the boundary of $H$ but in such a way that each of them gets a separate endpoint in $f_\infty$. This is the same notion of a dual as is used in \cite{randissection}. Equivalently, given a $T \in \mathbf{NCT}_n$, add edges to each of its marked corners which have one of their endpoints at the corresponding corner and the other one at a new leaf, see Fig.~\ref{fig:augmented}. We will call these trees \emph{augmented} NCTs and denote them by $T^\bullet$. These play the role of the \emph{planted trees} used in \cite{randissection}.

\subsection{Condition $(\cond^\csharp)$ and  enriched trees.} \label{ss:enriched} Under condition $(\cond^\csharp)$ the Halin-maps still have the property that each vertex lies on the boundary of the face $f_\infty$, however they are not necessarily two-connected. Therefore, they will in general be outerplanar maps. An outerplanar map may be decomposed into an underlying plane tree whose vertices are decorated by sequences of dissections. This representation is explained in detail in \cite[Lemma 4.3]{stefansson:2019} in the formalism of \emph{enriched trees} (or decorated trees) where it is used to obtain scaling limit results for face weighted outerplanar maps. We will follow the same approach, but in the current case the dissections involved are non-crossing dissections (NCDs) and therefore we will need to understand some of their properties in more detail.   

We give a short description of this representation below. Let $\mathbf{NCD}'_n$ be the set of NCDs with $n$ vertices where we do not count the root vertex. Pictorially we will represent the root vertex with a $\ast$.  Now define the set of finite sequences of size $n$ by
\begin{align*}
\sd_n = \bigcup_{k\geq 0} \bigcup_{\substack{n_1+\cdots + n_k = n \\ n_i \geq 1, ~1 \leq i \leq k}}  \{ (D_{n_1},D_{n_2},\ldots,D_{n_k})~:~D_{n_i} \in \mathbf{NCD}_{n_i}', 1 \leq i \leq k\}.
\end{align*}
An element from $\sd_n$ is viewed as a connected map by identifying the root vertices of each element in the sequence. Its unbounded face is denoted by $f_\infty$. The vertices of $S\in \sd_n$ are ordered by their occurence on the boundary of the unbounded face in a clockwise direction from the root-vertex. We will denote them by $s_0(S), s_1(S),\ldots,s_n(S)$ where $s_0(S)$ is the root-vertex. The directed edge $(s_0(S),s_1(S))$ is taken to be the root vertex.

Now, let $T \in \mathbf{T}_n$ be a plane tree with $n$ vertices. For each vertex $u \in T$ let $S_u$ be an element from $\sd_{\out(u)}$. Note that if $u$ is a leaf, i.e.~$\out(u) = 0$, then $S_u$ is the empty sequence. We construct a map as follows: For each non-leaf vertex $u$ and each of its non-leaf children $ui$, identify $s_i(S_u)$ and $s_0(S_{ui})$.  The resulting object is a non-crossing outerplanar map (NCO), see Fig.~\ref{fig:decorated} for an example. We denote this map by $O$. The construction is reversible, i.e.~ $O$ may be decomposed into an underlying plane tree $T$ where each vertex $u$ is decorated by an object from $\sd_{\out(u)}$. Therefore, we have a bijection between the set of NCOs and trees decorated by finite sequenced of NCDs. We will  therefore sometimes write $O = (T,(S_u)_{u\in T})$.

\begin{figure}[ht]
	\centering
	\resizebox{1.\textwidth}{!}{
		\includegraphics[width=30cm]{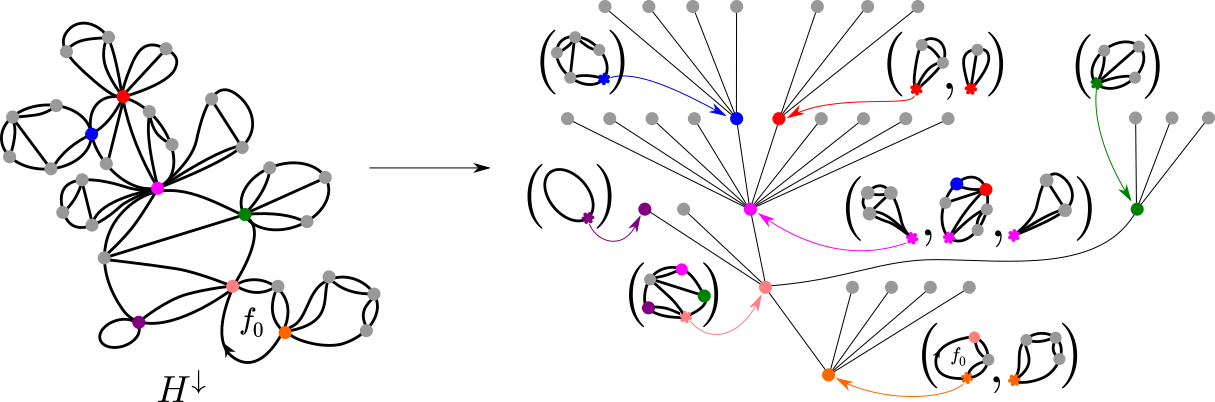}
	}  
	\caption{\textbf{Left}: A NCO, the same as in Fig.~\ref{fig:HalGrEx1}. \textbf{Right:} A corresponding decorated tree where the decorations are sequences of NCDs. Grey vertices are decorated by empty sequences. \label{fig:decorated}}
\end{figure}

\section{Simply generated trees and Galton-Watson trees}

In the previous section we showed that certain Halin-like maps may be described in terms of trees and enriched trees. In this section we show how this allows us to understand the corresponding random Halin-like maps in terms of simply generated trees and  Galton-Watson trees.

	Let $\mu$ be a probability measure on $\mathbb{Z}_{+}$ such that $\mu(1)<1$. We call $\mu$, the offspring distribution. The\textit{ Galton-Watson measure} with offspring distribution $\mu$ is the probability measure, $\GW^{\mu}$ on $\textbf{T}$ such that:
	\begin{enumerate}
		\item $\GW^{\mu}(\out({\varnothing})=j) =\mu(j)$ for $j \geq 0$,
		\item If $\tau$ is distributed by $\GW^{\mu}$, then for every $j \geq 1$ with $\mu > 0$, conditionally on $\{\out({\varnothing})=j\}$, the subtrees $\mathsf{shift}_{1}\tau, \ldots, \mathsf{shift}_{j}\tau$ are i.i.d.~with distribution $\GW^{\mu}$.
	\end{enumerate}
	Informally, the tree starts as a single root and then grows such that each individual already present in the tree has a number of offspring according to the measure $\mu$, independent of all others.

If $\mu$ has mean less than or equal to $1$ then the tree $\tau$ is almost surely finite and
\begin{equation}
\GW^{\mu}(\tau=T) = \prod_{u \in T}\mu(\out_\tau(u)).
\end{equation}
In order to relate the Galton-Watson measure to random non-crossing trees we require the following modification. Let $\mu_\varnothing$ be another offspring distribution and define 
\begin{align} \label{eq:twotype}
\GW^{\mu,\mu_\varnothing}(\tau = T) = \mu_{\varnothing}(\out_\tau(\varnothing)) \prod_{u \in T\setminus\{\varnothing\}}\mu(\out_\tau(u)).
\end{align}
We interpret this such that the root has offspring according to the measure $\mu_\varnothing$ and all other individuals have offspring according to $\mu$, independent of others.

Another related model of random trees is the following: Let $\eta = (\eta(i))_{i\geq 0}$ be a sequence of non-negative numbers, not necessarily probabilities. Let $T\in \mathbf{T}_n$ and define the weight of $T$ by
\begin{align*}
\eta(T) = \prod_{u\in T} \eta(\out_T(u)).
\end{align*}
A \emph{simply generated tree} on $n$ vertices is a random plane tree in $\mathbf{T}_n$ selected with a probability proportional to its weight $\eta(T)$. We list some important properties of simply generated trees below and refer to Janson's comprehensive survey \cite{janson} for details. Define the \emph{partition function}
\begin{align*}
Z_n^\eta = \sum_{T\in \mathbf{T}_n} \eta(T)
\end{align*}
and the generating series \begin {align} \label{partitionfunction}
\mathcal{Z}^\eta(z) = \sum_{n=1}^\infty Z^\eta_n z^n, \qquad g^\eta(z) = \sum_{n=0}^\infty \eta(n) z^n
\end{align}
and denote their radii of convergence by $R_\eta$ and $\rho_\eta$ respectively.  In the current work we will assume that $\rho_\eta > 0$ in which case it is possible, by \emph{tilting} the weights $\eta$, to assume that they are probabilities. Tilting refers to defining new weights
\begin{align}\label{eq:tilt1}
(\hat\eta(k))_{k\geq 0} = (a b^k \eta(k))_{k\geq 0}, \qquad a,b > 0
\end{align}
and noting that these define the same simply generated trees as the original weights since 
\begin{align} \label{eq:tilt2}
\hat\eta(T)= \prod_{u\in T} a b^{\out_T(u)} \eta(\out_T(u)) = a^{|T|} b^{|T|-1} \eta(T).
\end{align}

Then, the simply generated tree has the same distribution as a $\GW^\eta$ tree conditioned on having $n$ vertices which we will denote by $\GW^\eta_n$.

A standard result is the following relation between the  generating series 
\begin{align} \label{treeformula}
\mathcal{Z}^\eta(z) = z g^\eta(\mathcal{Z}^\eta(z)).
\end{align}
Define
\begin{align*}
\nu_\eta = \lim_{x\nearrow \rho_\eta} \mathbf{m}_\eta(x) \qquad \text{with}~\mathbf{m}_\eta(x) = \frac{x (g^\eta)'(x)}{g^\eta(x)}
\end{align*}
and
\begin{align} \label{taucondition}
\tau_\eta = \begin{cases}
\text{unique solution to}~\mathbf{m}_\eta(x) = 1 & \text{if}~\nu_\eta\geq 1 \\
\rho_\eta & \text{if}~\nu_\eta < 1.
\end{cases}
\end{align}
Then it holds that
\begin{align}\label{tauandR}
\tau_\eta = \mathcal{Z}^\eta(R_\eta) \qquad \text{and} \qquad R_\eta = \frac{\tau_\eta}{g^\eta(\tau^\eta)},
\end{align}
see e.g.~\cite[Remarks 7.4 and 7.5]{janson}. 
The parameter $\tau_\eta$ determines tilted weights so that the offspring distribution of the associated Galton-Watson tree has mean $\mathbf{m}_\eta(\tau_\eta)= 1$, if possible. Otherwise the mean $\mathbf{m}_\eta(\tau_\eta) = \nu_\eta < 1$ is the largest attainable. For these weights, we define their variance by
\begin{align*}
\sigma_\eta^2 = \tau_\eta \mathbf{m}'(\tau_\eta).
\end{align*}
When $\rho_\eta > 0$ we have the three cases: 

\begin{equation}\label{criteria}
\mathrm{I)} ~\nu_\eta > 1 \quad  \mathrm{II)} ~\nu_\eta = 1 \quad \mathrm{III)}~\nu_\eta < 1 
\end{equation}
in which the  large simply generated trees have different properties. In case I), the offspring distribution has finite exponential moments and the random trees, rescaled by $n^{-1/2}$, converge towards a multiple of Aldous' Brownian CRT. In case II) the trees also converge towards the CRT when $\sigma_\eta < \infty$ but when $\sigma_\eta = \infty$ one needs to assume the regularity condition that the weights are in the domain of attraction of an $\alpha$-stable law, with $\alpha \in (1,2]$. In that case, the trees, rescaled by $n^{-\frac{\alpha-1}{\alpha}}$ (up to slowly varying functions), converge towards the $\alpha$-stable tree  \cite{DuquesneGW,DuqGalllevypro}. Finally, in case III) there is no interesting scaling limit \cite{kortchemski} but there appears a unique vertex of degree $(1-\nu_\eta)n$ in the trees when they become large. This phenomenon has been referred to as \emph{condensation} \cite{janson,jonsson}. All the three cases above will be relevant when we consider the scaling limits of the Halin-like maps.

It is also possible to define modified simply generated trees which correspond to Galton-Watson trees distributed by $\GW^{\eta,\eta_\varnothing}$ conditioned on having $n$ vertices, denoted by $\GW_n^{\eta,\eta_\varnothing}$. First assume that $\eta$ and $\eta_\varnothing$ are sequences of non-negative weights, not necessarily probabilities. Define the weight of a tree $T$ by
\begin{align} \label{eq:twotype}
	\eta_\varnothing(T) = \eta_{\varnothing}(\out_T(\varnothing)) \prod_{u \in T\setminus\{\varnothing\}}\eta(\out_T(u)),
\end{align}
the partition function
\begin{align*}
	Z^\eta_{\varnothing,n} = \sum_{T\in \mathbf{T}_n} \eta_\varnothing(T)
\end{align*}
the generating series
\begin {align*}
\mathcal{Z}^\eta_\varnothing(z) = \sum_{n=1}^\infty Z_{\varnothing,n}^\eta z^n, \qquad g^\eta_\varnothing(z) = \sum_{n=0}^\infty \eta_\varnothing(n) z^n
\end{align*}
and denote their radii of convergence by $R_{\varnothing,\eta}$ and $\rho_{\varnothing,\eta}$ respectively. A modified simply generated tree with weights $\eta$ and $\eta_\varnothing$ is a random tree chosen with probability proportional to its weight $\eta_\varnothing$. The tilting in \eqref{eq:tilt1} may be applied simultaneously to $\eta$ and $\eta_\varnothing$ without changing the distribution.  We will, unless otherwise stated, assume that $\mu$ is given by \eqref{eq:mus} from which it follows that $\rho_{\varnothing,\eta}  = \rho_\eta$.

We now have the pair of equations \eqref{treeformula} and 
\begin{align} \label{treeformula2}
	\mathcal{Z}^\eta_\varnothing(z) = z g^\eta_\varnothing(\mathcal{Z}^\eta(z))
\end{align}
which immediately yields that $R_{\varnothing,\eta} = R_\eta$, since $\rho_{\varnothing,\eta} = \rho_\eta$.
We will still consider the three cases given by the criteria in \eqref{criteria} when we study the modified Galton-Watson trees. Note that these criteria do not depend on the probabilities $\eta_\varnothing$.

\subsection{Random non-crossing trees as modified Galton-Watson trees}

We will now explain how random non-crossing trees are related to the modified Galton-Watson trees distributed by $\GW^{\mu,\mu_\varnothing}$ and then we will present some results which show that these trees are in many aspects very close to $\GW^\mu$ trees. We start by stating the following result which is due to Kortchemski and Marzouk, in the context of non-crossing trees. 
\begin{theorem}[Kortchemski \& Marzouk, Theorem 18 \cite{Noncrossing}]\label{rem:G-W}
	Assume that condition $(\cond^\cstar)$ holds. Then
	\begin{align*}
	\mathcal{S}(\phi_n(\mathcal{H}_n^{w,\star})) \eqd \GW_n^{\mu,\mu_\varnothing}
	\end{align*}
	where $\mu$ and $\mu_\varnothing$ are probability measures defined by \eqref{eq:mus} and \eqref{eq:normalization}, $\phi_n$ denotes the bijection in Lemma \eqref{l:firstbijection} and $\mathcal{S}$ maps a non-crossing tree to its shape.
\end{theorem}
The proof follows from the fact that $\mathcal{H}_n^{w,\star}$ is a face weighted dissection and the outdegrees in $\phi_n(\mathcal{H}_n^{w,\star})$ are related to the face degrees in  $\mathcal{H}_n^{w,\star}$ by Lemma \ref{l:firstbijection}. 

The next lemma is analogous to \cite[Proposition 23]{Noncrossing} which applies in the case when $\mu$ is critical and is in the domain of attraction of a stable law with index $\alpha \in (1,2]$, which belongs to cases I and II in \eqref{criteria}. The current version shows that a similar result holds when $\nu_\mu < 1$ which belongs to case III in \eqref{criteria}. These results both show that when the two type trees $\GW_n^{\mu,\mu_{\varnothing}}$ are large, the degree of the root converges towards a finite random variable and that exactly one of the subtrees attached to the root has mass $M_n = n - o(1)$  with high probability. In particular, this large subtree, conditionally on $M_n$, behaves as a single type $\GW_{M_n}^\mu$ tree. This allows us to conclude that many of the asymptotic properties of the two type trees are the same as those of the single type trees. In particular, when $\nu_\mu < 1$, condensation will occur in the largest subtree of the root (but not at the root itself).

	Let $N_n$ denote the degree of the root and $M_n$ denote the size of the largest subtree of the root in a random tree distributed by $\GW^{\mu,\mu_\varnothing}_n$. Let $\mathcal{N}$ be a random variable with distribution
\begin{align*}
	\mathbb{P}(\mathcal{N}=k) = \frac{k \mu_\varnothing(k)}{\sum_{j\geq 1} j  \mu_\varnothing(j)}
\end{align*}
for $k\geq 1$. Let $(\tau_i)_{i\geq 0}$ be a sequence of independent trees distributed as $\GW^\mu$. When $\mu$ is critical or subcritical, $|\tau_i|$ is a.s.~finite. 

\begin{lemma} \label{l:cond} 
	Fix $\alpha > 1$ and let
	\begin{align} \label{muweights}
		\mu(k) = L(k) k^{-\alpha-1}
	\end{align} 
	be probabilities with mean $m = \nu_\mu < 1$ where $L$ is slowly varying. Let $\mu_\varnothing$ be the probabilities related to $\mu$ via \eqref{eq:mus}and \eqref{eq:normalization}. 
Then
\begin{align*}
N_n  \xrightarrow[n \rightarrow \infty]{\enskip (d) \enskip} \mathcal{N}
\end{align*}
and
\begin{align*}
n-1-M_n  \xrightarrow[n \rightarrow \infty]{\enskip (d) \enskip} \sum_{i=1}^{\mathcal{N}-1} |\tau_i|.
\end{align*}
\end{lemma}
\begin{proof}
This can be shown with direct but somewhat lengthy calculations but instead we use a recent general result by Stufler on Gibbs-partitions with a critical composition scheme \cite[Theorem 3.11, (i)]{stufler-gibbs-2}. We consider Equation \eqref{treeformula2} (with $\eta$ replaced by $\mu$) as such a composition scheme where $\mathcal{Z}^\mu_\varnothing$, $g^\mu_\varnothing$ and $\mathcal{Z}^\mu$ correspond respectively to $U$, $V$ and $W$ in \cite{stufler-gibbs-2}. In order to satisfy the requirements of this theorem, it suffices to have
\begin{align}
g'_\varnothing(\mathcal{Z}^\mu(R_\mu)) &< \infty, \label{case_gp}\\
	\rho_\mu &= \frac{}{} \mathcal{Z}^\mu(R_\mu),\label{cond_caseIII}\\
	\mu_\varnothing(k) &= \frac{c}{a}L_1(k) k^{-(\alpha+2)} \rho_\mu^{-k} \quad \text{and} \label{cond_muroot}\\
	Z_n^\mu &= \rho_\mu L_2(n) R_\mu^{-n}  ((1-\nu_\mu)n)^{-(\alpha+1)} \label{cond_partition}
\end{align}
for slowly varying functions $L_1$ and $L_2$.
First note that $\rho_{\varnothing,\mu} = \rho_\mu = 1$ and by \eqref{eq:mus} and \eqref{eq:normalization} it holds that
\begin{align*}
	\mu_\varnothing(k) \sim \frac{c}{a} L(k) k^{-\alpha-2}.
\end{align*}
Therefore \eqref{cond_muroot} holds with $L_1(k) \sim L(k)$ as $k\to\infty$.  Since $\nu_\mu < 1$ we are in case III in \eqref{criteria} and therefore \eqref{cond_caseIII} holds which further gives $\mathcal{Z}^\mu(R_\mu) = 1$. Equation \eqref{case_gp} then reads $g_\varnothing'(1) < \infty$ which holds due to \eqref{cond_muroot}. Finally, Equation \eqref{cond_partition} follows from \cite[Equation 14]{kortchemski} with $L_2(n) \sim L(n)$ as $n\to \infty$.

\end{proof}

\subsection{Random non-crossing outerplanar maps as random enriched trees}

Recall the definition $\sd_n$ of the finite sequences of NCDs in Section \ref{ss:enriched}. Let $w=(w(k):k \geq 2)$ be a weight sequence. Let $S \in \sd_n$ and define its weight as
\begin{align*}
 W(S) = \prod_{\substack{f \; \text{face of} \; S \\ f\neq f_\infty}}w(\deg(f))
\end{align*}
and the normalization by
\begin{align*}
	Z^{\eta,\text{seq}}_n = \sum_{S'\in\sd_n} W(S')
\end{align*}
with the convention that $Z^{\eta,\text{seq}}_0 = 1$. Let $\mathcal{SD}^w_n$ be the random element in $\sd_n$ distributed by $W(\cdot)/Z^{\eta,\text{seq}}_n$. Define the generating function
\begin{align*}
\mathcal{Z}^{\eta,\text{seq}}(z) = \sum_{n=0}^\infty Z^{\eta,\text{seq}}_n z^n.
\end{align*}
The next result shows how we may construct a random non-crossing outerplanar map by viewing it as an enriched tree. The statement is essentially the same as in Lemma 4.2 in \cite{stefansson:2019}, taking into account that the dissections are non-crossing in the current case. The proof follows directly from the representation of the NCO as a tree decorated by finite sequences of NCDs, explained in Section \ref{ss:enriched}.

\begin{lemma} \label{l:coupling}
	Assume that condition $(\cond^\csharp)$ holds. Let $\mathcal T_n^\eta$ be an $\eta$-simply generated tree, where $\eta(j) = Z^{\eta,\text{seq}}_j$ for all $j\geq 0$. Conditionally on $ \mathcal{T}_n^\eta$, for each vertex $u \in \mathcal{T}_n^\eta$, sample $\beta^w_u$ independently according the the distribution of $\mathcal{SD}^w_{\out(u)}$.  Then 
	\begin{align*}
		\mathcal{H}_n^{w,\csharp} \eqd  (\mathcal{T}_n^\eta,(\beta_u^w)_{u\in \mathcal{T}_n^\eta}).
	\end{align*}
\end{lemma}

In order to apply Lemma 3.2 to the study of scaling limits we need to understand how the weights $w$ affect the behaviour of $\beta_n^w$  and $\mathcal{T}_n^\eta$ for $n$ large. Each element in the finite sequence $\beta_n^w$ is a NCD which has a NCT as its weak dual. Thus, the possible cases which we consider for the weights $w$ will still be of the form given in \eqref{criteria} with $\eta = \mu$ and $\mu$ related to $w$ through Equation \eqref{eq:mus}. We then find the following cases for the weights $\eta$ according to the choice of $\mu$ (or equivalently $w$). We note that this result is analogous to the one for outerplanar maps obtained in \cite[Lemma 2.62]{stufler:2020}, but in the current case the dissections are non-crossing. The proof will be very similar but we include it for completeness.

\begin {lemma} \label{nuetaconditions} Let $\mu$ and $\mu_\varnothing$ be defined as in \eqref{eq:mus} and $\eta(j) = Z_j^{\mu,\text{seq}}$, $j\geq 0$. It holds that
\begin{align*}
\nu_\eta = \begin{cases}
\infty & \text{if }~\nu_\mu \geq 1,\\
\infty & \text{if }~0 <\nu_\mu < 1,~ \rho_\mu \geq 1, \\
\frac{A_\mu(1-\nu_\mu)+\rho_{\mu}B_\mu}{(1-A_\mu)(1-\nu_\mu)} & \text{if }~0 <\nu_\mu < 1, ~\rho_\mu < 1
\end{cases}
\end{align*}
where
\begin{align*}
A_\mu = \rho_\mu\frac{g_\varnothing^\mu(\rho_\mu)}{g^\mu(\rho_\mu)} \qquad \text{and}\qquad B_\mu = \rho_\mu \frac{(g_\varnothing^\mu)'(\rho_\mu)}{g^\mu(\rho_\mu)}.
\end{align*}
\end{lemma}

\begin {proof}
By Lemma \ref{l:coupling} we may write
\begin{align}\label{eq:simplygenerated}
\mathcal{Z}^\eta(z) = zg^\eta(\mathcal{Z}^\eta(z))
\end{align}
with
\begin{align}\label{subcritical_scheme}
g^\eta(z) = \mathcal{Z}^{\mu, \text{seq}}(z) = \frac{1}{1-\mathcal{Z}^{\mu}_\varnothing(z)}.
\end{align} The second equality follows from a standard argument (see e.g.~\cite{stufler:2020}) which relates the generating function of the partition function for finite sequences of NCTs to the generating function of the partion function for NCTs. It follows that
\begin{align} \label{minroc}
\rho_\eta = \min\{R_\mu,y\}
\end{align} 
where $y$ is the unique solution to $\mathcal{Z}_\varnothing^\mu(y) = 1$.
Recall that 
\begin{align*}
\nu_\eta = \lim_{x\nearrow \rho_\eta} \mathbf{m}_\eta(x)
\end{align*}
where 
\begin{align} \label{them}
\mathbf{m}_\eta(x) = \frac{x (g^\eta)'(x)}{g^\eta(x)} = \frac{x (\mathcal{Z}^{\mu}_\varnothing)'(x)}{1-\mathcal{Z}^{\mu}_\varnothing(x)} = x (\mathcal{Z}^{\mu}_\varnothing)'(x) \sum_{n=0}^\infty (\mathcal{Z}^{\mu}_\varnothing(x))^n.
\end{align}
Differentiating \eqref{treeformula} and \eqref{treeformula2} we get
\begin{align*}
(\mathcal{Z}^{\mu})'(x) &= g^\mu(\mathcal{Z}^\mu(x))+x (g^\mu)'(\mathcal{Z}^\mu(x)) (\mathcal{Z}^\mu)'(x) \\
(\mathcal{Z}^{\mu}_\varnothing)'(x) &= g^\mu_\varnothing(\mathcal{Z}^\mu(x))+x (g_\varnothing^\mu)'(\mathcal{Z}^\mu(x)) (\mathcal{Z}^\mu)'(x)
\end{align*}
which solved together for $(\mathcal{Z}_\varnothing^\mu)'(x)$ yields
\begin{align}\label{Zprime}
(\mathcal{Z}^{\mu}_\varnothing)'(x) = g^\mu_\varnothing(\mathcal{Z}^\mu(x)) + \frac{x (g_\varnothing^\mu)'(\mathcal{Z}^\mu(x))g^\mu(\mathcal{Z}^\mu(x))}{1-x(g^\mu)'(\mathcal{Z}^\mu(x))}.
\end{align}

We first assume that $\nu_\mu \geq 1$ and show that $\nu_\eta = \infty$. By \eqref{them}, we may view $\mathbf{m}_{\eta}(x)$ as a power series in $x$ with positive coefficent whose radius of convergence equals 
\begin{align*}
r=\min\{R_\mu,y\} = \rho_\eta
\end{align*}
where $y$ is the unique value such that $\mathcal{Z}_\varnothing^\mu(y)=1$. It is therefore sufficient to show that $\mathbf{m}_{\eta}(\rho_\eta) = \infty$. Since $\nu_\mu \geq 1$ then by \eqref{taucondition} there is a $\tau_\mu \leq \rho_\mu = \rho_{\varnothing,\mu}$ such that 
$\mathbf{m}_\mu(\tau_\mu) = 1$ and we also recall the relations \eqref{tauandR} which state that $\tau_\mu = \mathcal{Z}^\mu(R_\mu)$ and $R_\mu = \tau_\mu/g^\mu(\tau_\mu)$. Thus,
\begin{align*}
(\mathcal{Z}^{\mu}_\varnothing)'(R_\mu) &= g^\mu_\varnothing(\tau_\mu) + \frac{R_\mu (g_\varnothing^\mu)'(\tau_\mu)g^\mu(\tau_\mu)}{1-R_\mu (g^\mu)'(\tau_\mu)} \\
&=  g^\mu_\varnothing(\tau_\mu) + \frac{\tau_\mu (g_\varnothing^\mu)'(\tau_\mu)}{1-\mathbf{m}_\mu(\tau_\mu)} = \infty.
\end{align*}
From this, \eqref{minroc} and  \eqref{them} it follows that $\mathbf{m}_\eta(\rho_\eta) = \infty$. 

Next, assume that $0 < \nu_\mu < 1$. Then by \eqref{taucondition} $\tau_\mu = \rho_\mu$. Firstly, if $\rho_\mu \geq 1$ then $\mathcal{Z}^\mu(R_\mu) =\tau_\mu \geq 1$ and therefore there is a $y\leq R_\mu$ such that $\mathcal{Z}^\mu(y) = 1$. Thus $\rho_\eta = y$ and we see immediately from \eqref{them} that $\mathbf{m}_\eta(\rho_\eta) = \infty$ and thus $\nu_\eta = \infty$. Finally, if $\rho_\mu < 1$ then $\rho_\eta = R_\mu$ and we get from \eqref{treeformula2}, \eqref{them} and \eqref{Zprime} that
\begin{align*}
\nu_\eta &= \mathbf{m}_\eta(\rho_\eta) = \mathbf{m}_\eta(R_\mu)  = \frac{R_\mu(\mathcal{Z}^{\mu}_\varnothing)'(R_\mu)}{1-\mathcal{Z}^{\mu}_\varnothing(R_\mu)} \\
&= \frac{\tau_\mu}{g^\mu(\tau_\mu)}\left[\frac{g_\varnothing^\mu(\tau_\mu)+\frac{\tau_\mu (g_\varnothing^\mu)'(\tau_\mu)}{1-\nu_\mu}}{1-\tau_\mu\frac{g_\varnothing^\mu(\tau_\mu)}{g^\mu(\tau_\mu)}}\right]
\end{align*}
The result follows by recalling that $\tau_\mu = \rho_\mu$ in the current case. 
\end{proof}
\begin{remark}
	If $\mu= \mu_\varnothing$ then $A_\mu= \rho_\mu$ and $B_\mu = \nu_\mu$ and we recover the formula in \cite{stufler:2020}.
\end{remark}

\section{Proofs of the limit theorems}\label{sec:result}
In this section we provide more details for the proofs of Theorems \ref{mainresultCRT1} - \ref{mainresult2}. In each case we show how existing techniques may be adapted to the current situation. In the first three subsection we introduce some important definitions. Firstly, we introduce the \emph{\L ukasiewicz path} of a tree which is a central object in the proofs, secondly we define the notion of a discrete looptree and finally we recall the definition of the Gromov-Hausdorff metric.

\subsection{\L ukasiewicz path}
Let $T$ be a plane tree whose vertices are listed in depth first-search-order(or lexicographical order), $\varnothing = v(0) < v(1) < ... < v(|T|-1)$. The \L ukasiewicz path $W(T) = (W_{j}(T), 0 \leq j \leq |T|)$ is defined by $W_{0}(T):= 0$ and $$W_{j}(T) := \sum_{i=0}^{j-1}(\out(v(i)) - 1)$$  for $1 \leq j \leq |T|)$. Figure \ref{Lukapath} gives an example of a tree encoded by a \L ukasiewicz path. $W$ is viewed as a c\`adl\`ag function (right continuous with left limits) by assuming it is constant in between successive integers. It may be seen that $W_{j}(T) \geq 0$ for $0\leq j \leq |T|-1$, but $W_{|T|}(T)=-1$.

\begin{figure}[ht]
 \centering
 \resizebox{0.9\textwidth}{!}{
 \includegraphics[width=25cm]{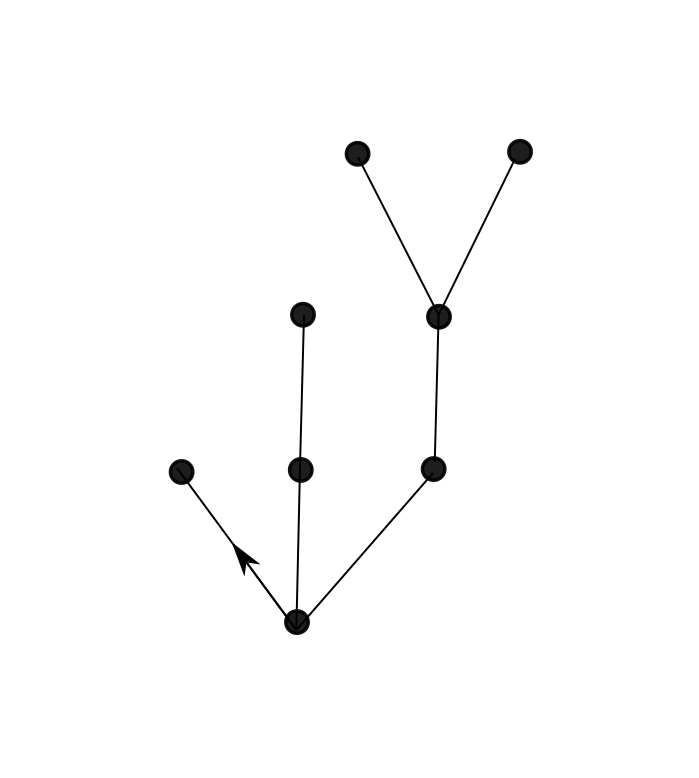}
		\hfill 
	\includegraphics[width=40cm]{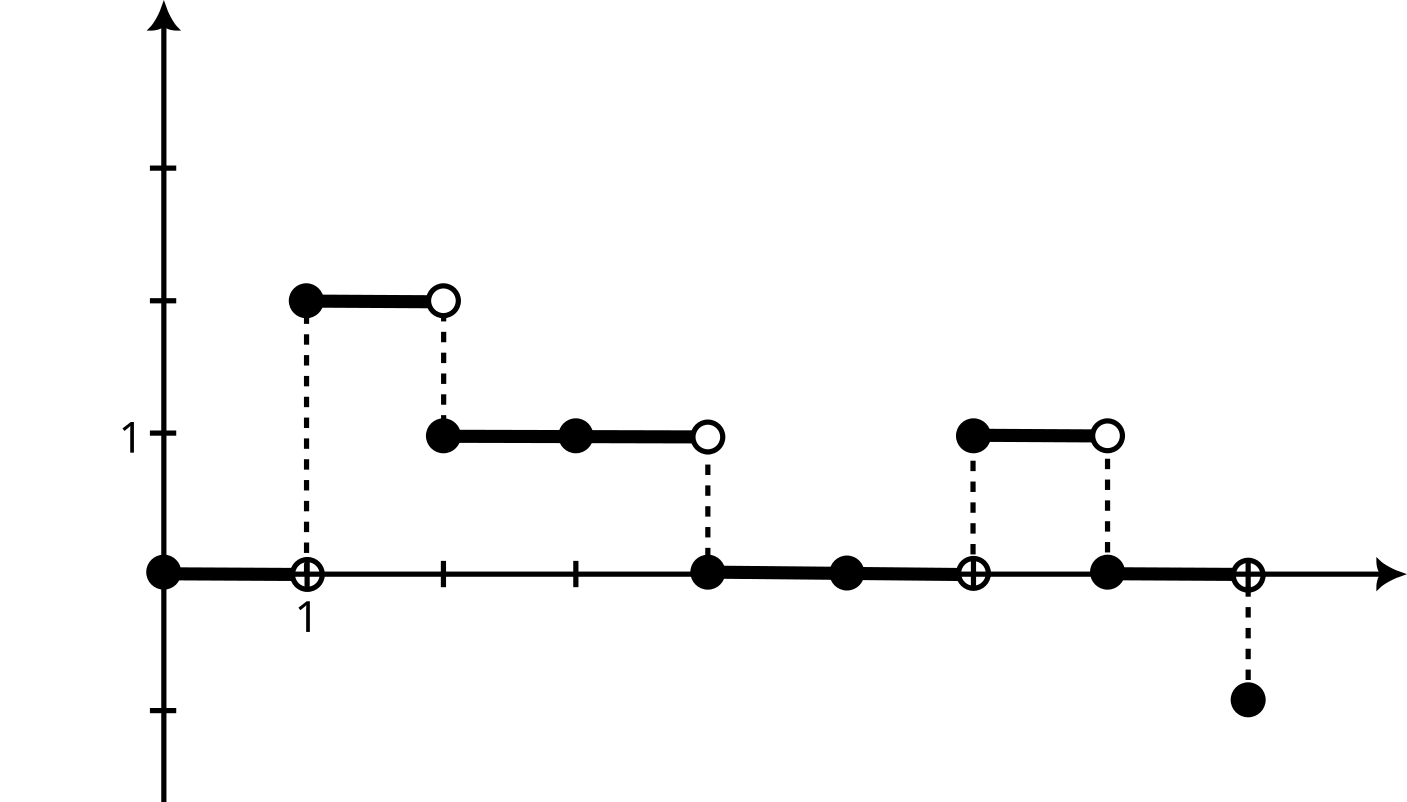}
	\hfill 
}
 
\caption{A tree encoded by a \L ukasiewicz path.}
\label{Lukapath}
\end{figure}     

Consider a sequence ($\tau_n$) distributed by $\GW^\mu_n$. If $\mu$ is in the domain of attraction of a stable law with index $\alpha\in(1,2)$, then 
\begin{align} \label{eq:lukconv}
	\left(\frac{1}{b_{n}}W_{\floor*{nt}}(\tau_{n}); 0 \leq t \leq 1 \right) \xrightarrow[n \rightarrow \infty]{\enskip (d) \enskip}X^{exc,\alpha},
\end{align}
where $X^{exc,\alpha}$ is an excursion of an $\alpha$-stable spectrally positive L\'evy process (see \cite{DuquesneGW,DuqGalllevypro}). Background information on L\'evy process can be found in \cite{BertoinLevy} and for more details, see \cite{CurienKortchloop}. This process lives on the Skorokhod space $D([0,1],\mathbb{R})$ of real valued c\`adl\`ag functions.

\subsection{Discrete looptrees}
A discrete looptree is constructed on the vertex set of a plane tree, $T$ as follows: For each vertex $v$ of outdegree $k \geq 1$, add the edges $\{v,v1\}$ and $\{v,vk\}$ (with repetition if $k=1$), and add the edges $\{vi,v(i+1)\}$, for $i = 1,\ldots, k-1$.
See Fig.~\ref{looptree} for an illustration. We denote the associated looptree by $\Loop(T)$. $\Loop(T)$ will be viewed as a metric space equipped with the graph distance as its metric (each edge with length one). 

\subsection{The Gromov-Hausdorff topology.} \label{ss:GH} Let $(E_{1},d_{1})$,$(E_{2},d_{2})$ be two compact metric spaces. The Gromov-Hausdorff distance between $E_{1}$ and $E_{2}$ is \\

\[
\mathrm{d}_{\mathrm{GH}}(E_{1},E_{2})=\inf_{\substack{(E,d)\,\mathrm{metric}\,\mathrm{space}\\\varphi_{1}:E_{1}\rightarrow E\\\varphi_{2}:E_{2}\rightarrow E}}\mathrm{d}_{\mathrm{Hau}}^{E}(\varphi_{1}(E_{1}),\varphi_{2}(E_{2}))
\]
where $\varphi_{1}$ and $\varphi_{2}$ are isometries. Recall that the Hausdorff distance between two compact subsets, $K$ and $K\textprime$ of the metric space $E$ is 
\[\mathrm{d}_{\mathrm{Hau}}^{E}(K,K\textprime) = \inf\{\varepsilon > 0~:~ K\subseteq K\textprime_{\varepsilon}\,\mathrm{and}\,K\textprime \subseteq K_{\varepsilon}\}\]
where $K_{\varepsilon}$ denotes the $\varepsilon$-neighborhood of $K$. An equivalent definition of the Gromov-Hausdorff distance involves correspondences. A correspondence between two metric spaces, $(E_{1},d_{1})$ and $(E_{2},d_{2})$ is a subset $\mathcal{R}$ of $E_{1} \times E_{2}$ such that for every $x_{1}\in E_{1}$, there exists at least one point $x_{2}\in E_{2}$ such that $(x_{1},x_{2})\in \mathcal{R}$ and conversely for every $y_{2}\in E_{2}$, there exists at least one point $y_{1}\in E_{1}$ such that $(y_{1},y_{2})\in \mathcal{R}$. The distortion of the correspondence $\mathcal{R}$ is defined by 
\[\mathrm{dis}(\mathcal{R}) = \sup\{\vert d_{1}(x_{1},y_{1}) - d_{2}(x_{2},y_{2}) \vert:(x_{1},x_{2}),(y_{1},y_{2})\in \mathcal{R}\}.\]
The Gromov-Hausdorff distance can be written in term of correspondences by the formula,
\[\mathrm{d}_{\mathrm{GH}}(E_{1},E_{2})=\frac{1}{2}\inf\{\mathrm{dis}(\mathcal{R})\}\]
where the infimum is over all correspondences $\mathcal{R}$ between $E_{1}$ and $E_{2}$. We refer the reader to \cite{metricgeo} for details about this metric.\\

Let $(X,d)$ be a compact metric and denote the diameter of $X$ by $\diam_{d}(X)$. We will often suppress $d$ from the notation if it is the intrinsic metric or graph metric on $X$. If $G$ is a graph we denote the vertex set of $G$ by $V_G$ (or sometimes by $G$) and the graph metric on $V_G$ by $d_G$. The following result will be used repeatedly and its proof is included for completeness. 
\begin{lemma} \label{l:GHestimate}
Let $G$ be a graph and $H$ and $\bar H$ be subgraphs of $G$ such that $G = H \cup \bar H$ and $H \cap \bar H = e$ is a single edge $e = \{v_1,v_2\}$ (with the possibility that $v_1 = v_2$). Then
\begin{align*}
	d_{\mathrm{GH}}((V_G,d_G),(V_H,d_H)) \leq \diam_{d_{\bar{H}}}(V_{\bar H}).
\end{align*}
\end{lemma}
\begin{proof}
The identity map on $V_G$ maps $(V_H,d_H)$ isometrically to $(V_G,d_G)$ since $H\cap \bar H$ is a single edge.
Therefore
\begin{align*}
	d_{\mathrm{GH}}((V_G,d_G),(V_H,d_H)) &\leq d_{\mathrm{Hau}}^G(V_G,V_H) = \inf\{\varepsilon\geq 0 ~:~V_G \subseteq (V_H)_\varepsilon \} \\
	&= \max_{v\in\{v_1,v_2\}} \max_{w\in V_{\bar H}} d_{\bar H}(v,w) \leq \diam_{\bar H}(V_{\bar H}).
\end{align*}
\end{proof}

\bigskip
\subsection{Proof of Theorem \ref{mainresultCRT1}}\label{ss:proof1}
We rely on the Markov-chain argument established in \cite{randissection}. This argument involves introducing many new concepts and instead of repeating all their definitions here, we aim to point out how our case is different and in which way that modifies the result. 

Let $\mathcal{T}^w_n = \phi(\mathcal{H}_n^{w,\star})$ be the weak dual of $\mathcal{H}_n^{w,\star}$ and choose $\mu$ and $\mu_\varnothing$ as in \eqref{eq:mus} such that they are probability measures satisfying the assumptions of Theorem \ref{mainresultCRT1}. By Theorem \ref{rem:G-W}, the shape $\mathcal{S}(\mathcal{T}_n^{w})$ is distributed as $\GW^{\mu,\mu_\varnothing}_n$.  
Recall that $\mathcal{T}_n^{w,\bullet}$ denotes the augmented tree obtained by attaching leaves by an edge to each corner of $\mathcal{T}_n^w$. In \cite{randissection}, the setup is similar, they consider a random face weighted dissection $\mathcal{D}^\mu_n$, (not necessarily non-crossing), whose augmented weak dual $\hat{\mathcal{T}}_n^{\mu,\bullet} := \phi(\mathcal{D}^\mu_n)^\bullet$ is distributed as a planted $\GW^\mu$ tree conditioned on having $n$ leaves. The difference to our case is thus only that we consider an augmented, modified $(\mu,\mu_\varnothing)$-Galton-Watson tree instead of a planted normal $\mu$-Galton-Watson tree and we condition on the number of vertices instead of the number of leaves.

 Kortchemski and Marzouk \cite[Theorem 21 and Remark 26]{Noncrossing}  showed that
\begin{align*}
n^{-1/2} \cdot \mathcal{T}^w_n \xrightarrow[n \rightarrow \infty]{\enskip (d) \enskip} c_{\mathrm{tree}}(\mu)\cdot \mathcal{T}_{\mathbf{e}}
\end{align*}
with $c_{\mathrm{tree}}(\mu) := 2\sigma_\mu^{-1}$. This is the same as Aldous' well-known result but for the modified Galton-Watson tree $\GW^{\mu,\mu_\varnothing}_n$. Since
\begin{align*}
	d_{\mathrm{GH}}(\mathcal{T}_n^{w},\mathcal{T}_n^{w,\bullet})\leq 1
\end{align*}
one obtains similarly that
\begin{align} \label{eq:scaling_limit_modified}
	n^{-1/2} \cdot \mathcal{T}^{w,\bullet}_n \xrightarrow[n \rightarrow \infty]{\enskip (d) \enskip} c_{\mathrm{tree}}(\mu)\cdot \mathcal{T}_{\mathbf{e}}.
\end{align}
We note that the constant $c_{\mathrm{tree}}(\mu)$ is different from the one in \cite{randissection} which is only due to the fact that they condition on the number of leaves. 

The key argument in \cite{randissection} involves showing how geodesics in a dissection $\mathcal{D}$ may be found from geodesics in the corresponding augmented weak dual tree $\hat{\mathcal{T}}^\bullet = \phi(\mathcal{D})^\bullet$ by an iterative local construction. This comparison of geodesics works deterministically but is then applied to the random objects. The first comparison is made on the local limits of the two objects so we start by describing the local limit in the current case.

First, define the size-biased distributions 
\begin{align*}
	\mu^\ast(k) = k\mu(k), \qquad k\geq 1
\end{align*}
and
\begin{align*}
	\mu_\varnothing^\ast(k) = \frac{k\mu_\varnothing(k)}{\sum_{j\geq 1} j\mu_\varnothing(j)}, \qquad k\geq 1.
\end{align*}
Then define an infinite tree $\mathcal{T}_\infty^w$ as folllows. Let $(W_1,W_2,\ldots)$ be an infinite path, often referred to as a \emph{spine}. The vertex $W_1$ has offspring according to $\mu^\ast_\varnothing$, one of which is chosen uniformly at random to be $W_1$. Each $W_j$, $j\geq 1$ has independently offspring according to $\mu^\ast$, one of which is chosen uniformly at random to be $W_{j+1}$. From each of the offspring, excluding $W_1,W_2,\ldots$, we grow independent $\GW^\mu$ trees, see Fig.~\ref{fig:locallimit}. 

   \begin{figure}[ht]
	\centering
	\resizebox{1.\textwidth}{!}{
		\includegraphics[width=8.9cm]{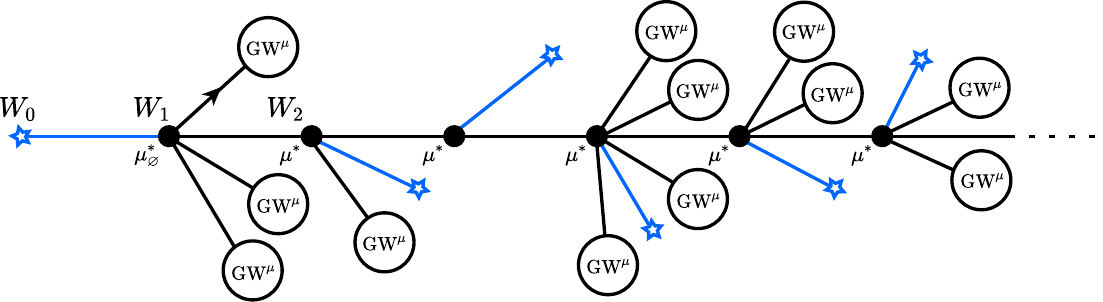}
	}  
	\caption{The tree $\mathcal{T}^w_\infty$. Its augmentation $\mathcal{T}^{w,\bullet}_\infty$ is depicted by blue edges and star-shaped vertices.  \label{fig:locallimit}}
\end{figure}

The augmented infinite tree $\mathcal{T}_\infty^{w,\bullet}$ is obtained by first sampling $\mathcal{T}_\infty^w$ and then independently for each vertex different from $W_1$, a corner is sampled uniformly and a new leaf attached to it by an edge. A leaf, denoted by $W_0$, is also attached to $W_1$ at the corner which is counter-clockwise from the root-edge. This last step is what is referred to as \emph{planting} in \cite{randissection}. 

Let $k\geq 0$ be an integer and denote by $[T]_k$ the subtree of $T$ consisting of its first $k$ generations. By a minor adjustment of standard arguments concerning local limits (see e.g.~\cite[Theorem 7.1]{janson}), it holds that 
\begin{align*}
	[\mathcal{T}_n^{w,\bullet}]_k \xrightarrow[n \rightarrow \infty]{\enskip (d) \enskip} [\mathcal{T}_\infty^{w,\bullet}]_k.
\end{align*}
There is a corresponding infinite dual dissection $\mathcal{H}_\infty^{w,\star}$, and we denote the edges in that dissection which are dual to $(W_i,W_{i+1})$ by $e_i = (e_{i}^L,e_{i}^R)$, i $\geq 0$ where $e_i^L$ (resp.~$e_i^R$) denotes the vertex to the left (resp.~right) of the spine.

Consider the random sequence $(S_n)_{n\geq 0}$ defined by
\begin{align*}
S_n = \min\{d^\star(e_0^L,e_n^{L}),d^\star(e_0^L,e_n^{R})\}
\end{align*} 
where $d^\star$ denotes the graph distance in $\mathcal{H}_\infty^{w,\star}$. The process $(S_n)_{n\geq 0}$ provides information on distances between points in the dissection, as a function of the number of steps $n$ taken along the spine in the tree. In order to find the value $S_n$, a geodesic whose length is $S_n$ is constructed step by step. This geodesic has to go through either $e_i^L$ or $e_i^R$ for each $i\geq 0$. Assuming we have constructed the first $i$ steps, in order to construct the next one we need to know whether we went through $e_i^L$ or $e_{i}^R$ or whether it is still undecided. The information on this position is stored in a variable $P_i$ (position) which takes one of the three values: $L$ (left), $R$ (right) or $U$ (undecided).  We refer to \cite[Section 2.2]{randissection} for the precise description. 

Let $X_0 = 0$, $X_n = S_{n}-S_{n-1}$, $n\geq 1$ denote the increments of $S_n$. It is shown in \cite{randissection} that $(X_n,P_n)_{n\geq 0}$ is a Markov-chain whose transition probabilities only depend on the value of $P_n$. The sequence $(S_n,P_n)_{n\geq 0}$ is called a Markov additive process with driving chain $(P_n)_{n\geq}$. Step $i$ of the chain can be found by comparing the number of children of $W_i$ to the left and to the right of the spine, including the additional leaf added in the augmentation. Denote the number of children of $W_{n+1}$ in $\mathcal{T}_\infty^{w,\bullet}$  which lie to the left (resp.~right) of the spine by by $G_n$ (resp.~$D_n$), and the total number of its children by $C_{n+1} = G_n + D_n + 1$. 

By comparing with the transition probabilities in \cite[pages 11-12]{randissection} we find that
\begin{align*}
	\mathbb{P}(X_{n+1} = i, P_{n+1} = R~|~P_n = R) &=  \mathbb{P}(G_{n} \geq i, D_{n} = i)\\
	\mathbb{P}(X_{n+1} = i, P_{n+1} = L~|~P_n = R) &=  \mathbb{P}(G_{n} \geq i+1, D_{n}= i - 1) \\
	\mathbb{P}(X_{n+1} = i, P_{n+1} = U~|~P_n = R) &=  \mathbb{P}(G_{n} = i, D_{n}= i -1) \\
		\mathbb{P}(X_{n+1} = i, P_{n+1} = R~|~P_n = U) &= \mathbb{P}(G_{n} \geq i+1, D_{n}= i)  \\
			\mathbb{P}(X_{n+1} = i, P_{n+1} = U~|~P_n = U) &=  \mathbb{P}(G_{n} =i, D_{n}= i). \\
\end{align*}
By symmetry between left and right, the first four lines also give the transition probabilities when $L$ and $R$ switch roles.

Recall that $C_{n+1} = G_n + D_n+1$, and that for $n\geq 1$
\begin{align*}
\mathbb{P}(C_{n+1}-1 = j) = \mu^\ast(j) = j\mu(j)
\end{align*}
and by symmetry between left and right
\begin{align*}
\mathbb{P}(G_n = i~|~ C_{n+1} = j) = \mathbb{P}(D_n = i~|~ C_{n+1} = j) = \frac{1}{j}\mathbbm{1}\{0\leq i \leq j-1\}.
\end{align*}
From this we can find formulas for the transition probabilities in terms of $\mu$. For easier notation, define $\gamma(j) = j\mu(j)/(j+1)$ and for any $A\subseteq \mathbb{Z}_+ = \{0,1,\ldots\}$, and any $i \in \mathbb{Z}_+$ let
\begin{align*}
	\gamma(A) = \sum_{j\in A} \gamma(j) \qquad \text{and}\qquad \bar\gamma(i) = \sum_{j\geq i}\gamma(j).
\end{align*}
Then we find for example that 
\begin{align*}
\mathbb{P}(G_n\geq i, D_n = i) &= \sum_{j\geq 1} \mathbb{P}(G_n\geq i, D_n = i~|~ C_{n+1}-1=j)\mathbb{P}(C_{n+1}-1=j) \\
&= \sum_{j\geq 2i} \mathbb{P}(D_n = i~|~C_{n+1} = j+1) \mu^\ast(j) \\ &= \sum_{j\geq 2i} \frac{j}{j+1}\mu(j) = \bar{\gamma}(2i)
\end{align*}
and with similar calculations we get
\begin{align*}
\mathbb{P}(G_{n} \geq i+1, D_{n}= i - 1) &=  \bar{\gamma}(2i)\mathbbm{1}\{i\geq 1\},\\
 \mathbb{P}(G_{n} = i, D_{n}= i -1) &= \gamma(2i-1)\mathbbm{1}\{i\geq 1\}, \\
  \mathbb{P}(G_{n} \geq i+1, D_{n}= i) &= \bar \gamma(2i+1), \\
  \mathbb{P}(G_{n} =i, D_{n}= i)  &= \gamma(2i).
\end{align*}

Note that these formulas only hold for $n\geq 1$ since the degree distribution of $W_1$ is different from the others. However,  the first step $X_1$ does not affect the asymptotic behaviour of $S_n$ and therefore we leave out the details.

Because of symmetry between left and right we proceed as in \cite{randissection} and say that a state $P_n$ is determined, denoted by $P_n = D$ if and only if $P_n \in \{L,R\}$. Then
\begin{align*}
	\mathbb{P}(X_{n+1} = i, P_{n+1} = D~|~P_n = D) &=  \bar\gamma(2i) + \bar\gamma(2i)\mathbbm{1}\{i \geq 1\}\\
	\mathbb{P}(X_{n+1} = i, P_{n+1} = U~|~P_n = D) &= \gamma(2i-1)\mathbbm{1}\{i\geq 1\}  \\
	\mathbb{P}(X_{n+1} = i, P_{n+1} = D~|~P_n = U) &= 2\bar\gamma (2i+1) \\
	\mathbb{P}(X_{n+1} = i, P_{n+1} = U~|~P_n = U) &= \gamma (2i).
\end{align*}

By summing over $i$ in the above formulas we find the transition probabilites of the driving chain $(P_n)_{n\geq 0}$ to be

\begin{align*}
	\mathbb{P}(P_{n+1} = U~|~P_n = D) &=  \gamma(2\mathbb{Z}_+ +1) = 1 - 	\mathbb{P}(P_{n+1} = D~|~P_n = D) \\
			\mathbb{P}(P_{n+1} = U~|~P_n = U) &=  \gamma(2\mathbb{Z}_+) = 1 - 	\mathbb{P}(P_{n+1} = D~|~P_n = U). \\
\end{align*}
This allows us to find the stationary state of the driving chain
\begin{align*}
\pi(D) = \frac{1-\gamma(2\mathbb{Z}_+)}{1-\gamma(2\mathbb{Z}_+)+\gamma(2\mathbb{Z}_+ + 1)}, \qquad \pi(U) = \frac{\gamma(2\mathbb{Z}_+ + 1)}{1-\gamma(2\mathbb{Z}_+)+\gamma(2\mathbb{Z}_+ + 1)}
\end{align*}
as long as $\gamma(2\mathbb{Z}_+ +1) > 0$. 
 The constant $c_{\mathrm{geo}}(\mu)$ appearing in Theorem \ref{mainresultCRT1} may now be calculated by finding the expected value of the step distribution $X_n$, for $n\geq 2$, when the driving chain is in its stationary state. Using this, along with \eqref{eq:scaling_limit_modified},  allows us to complete the proof. However, we  require a few additional technical details which are the same, mutatis mutandis, as in \cite[pages 12-18]{randissection} so we leave out the details. We therefore conclude by calculating
\begin{align*}
	c_\mathrm{geo}(\mu) &= \mathbb{E}_\pi(X_2) = \sum_{i\geq 0} i\mathbb{P}(X_2 = i~|~P_1 = D)\pi(D) + i\mathbb{P}(X_2 = i~|~P_1 = U)\pi(U) \\
	&= \frac{1}{1-\gamma(2\mathbb{Z}_+)+\gamma(2\mathbb{Z}_+ + 1)}\Bigg(\Big(\sum_{i\geq 1 }i\bar{\gamma}(2i)+\sum_{i\geq 1}i\bar\gamma (2i-1)\Big)(1-\gamma(2\mathbb{Z}_+)) \\
	&+\Big(\sum_{i\geq 0}i \bar \gamma(2i+1)+\sum_{i\geq 0} i \bar\gamma(2i)\Big)\gamma(2\mathbb{Z}_++1)\Bigg).
\end{align*}
By swapping the sum over $i$ and the sum appearing in $\bar \gamma$ in each term we find that
\begin{align*}
\sum_{i\geq 1}i\bar \gamma(2i)+ \sum_{i\geq 1}i\bar \gamma(2i-1)  &= \frac{1}{2}\sum_{j\geq 1} \gamma(j)\left(\left\lfloor \frac{j}{2} \right\rfloor \left(\left\lfloor\frac{j}{2} \right\rfloor +1\right)+\left\lfloor \frac{j+1}{2} \right\rfloor \left(\left\lfloor\frac{j+1}{2} \right\rfloor +1\right)\right) \\
&= \frac{1}{2} \sum_{j\geq 1} \gamma(j)\left(\frac{1}{2}j(j+1)+ \frac{1}{2}\mathbbm{1}\{j~\mathrm{odd}\}\right) \\
&= \frac{1}{4} \sum_{j\geq 1} j^2 \mu(j) + \frac{1}{4} \gamma(2\mathbb{Z}_+ + 1) \\
&= \frac{\sigma_\mu^2 +1 + \gamma(2\mathbb{Z}_+ + 1)}{4}
\end{align*}
and similarly
\begin{align*}
\Big(\sum_{i\geq 0}i \bar \gamma(2i+1)+\sum_{i\geq 0} i \bar\gamma(2i)\Big) &=  \frac{\sigma_\mu^2 +1 - \gamma(\mathbb{Z}_+)- \gamma(2\mathbb{Z}_+ + 1)}{4}
\end{align*}
which finally yields
\begin{align*}
c_\mathrm{geo}(\mu) &= \frac{1}{4}\left(\sigma_\mu^2 + 1 + \frac{\gamma(2\mathbb{Z}_++1)(1-2\gamma(\mathbb{Z}))}{1-\gamma(2\mathbb{Z}_+)+\gamma(2\mathbb{Z}_+ +1)}\right).
\end{align*}

\begin{flushright} $\square$ \end{flushright}

\subsection{Proof of Theorem \ref{mainresult1}} \label{ss:proof2}
The following invariance theorem gives a sufficient condition on a sequence of random plane trees $(\tau_{n})_{n\geq 1}$ that ensures that the associated looptrees, $(\Loop(\tau_{n}))_{n \geq 1}$ appropriately rescaled, converge towards the $\alpha$-stable looptree $\mathcal{L}_{\alpha}$. Informally, one can view the $\alpha$-stable looptree $\mathcal{L}_{\alpha}$ as "$\Loop(\mathcal{T}_\alpha)$", where $\mathcal{T}_\alpha$ is the $\alpha$-stable tree mentioned in the introduction. The precise definition requires more work and we refer to \cite[Sec. 2]{CurienKortchloop} for the details of the construction.

\begin{theorem}[Curien \& Kortchemski, \cite{CurienKortchloop}]\label{invariance}
	Let $(\tau_{n})_{n\geq 1}$ be a sequence of random trees such that there exists a sequence $(b_{n})_{n\geq 0}$ of positive real numbers satisfying
	\begin{enumerate}
		\item[(i)] $\left(\frac{1}{b_{n}}W_{\floor*{|\tau_{n}|t}}(\tau_{n}); 0 \leq t \leq 1 \right) \xrightarrow[n \rightarrow \infty]{\enskip (d) \enskip}X^{exc,\alpha}$,
		\item[(ii)]$ \frac{1}{b_{n}}\Height(\tau_{n}) \xrightarrow[n \rightarrow \infty]{\enskip (\mathbb{P}) \enskip}0$
	\end{enumerate}
	where the first convergence holds in distribution for the Skorokhod topology on $D([0,1],\mathbb{R})$ and the second convergence holds in probability. Then the convergence $$\frac{1}{b_{n}}\cdot \textsf{Loop}(\tau_{n})\xrightarrow[n \rightarrow \infty]{\enskip (d) \enskip}\mathcal{L}_{\alpha} $$ holds in distribution for the Gromov-Hausdorff topology.
\end{theorem}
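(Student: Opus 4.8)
The approach I would take is that of Curien and Kortchemski \cite{CurienKortchloop}: both $b_n^{-1}\cdot\Loop(\tau_n)$ and $\mathcal{L}_\alpha$ are, up to a negligible error, the image of an excursion-type function under one and the same ``looptree'' functional, so it is enough to feed hypothesis (i) into a continuity property of that functional and then throw away the error by means of hypothesis (ii).

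\textbf{Step 1 (coding both sides by an excursion).} List the vertices of $\tau_n$ in depth-first order $v(0)<\cdots<v(\zeta(\tau_n)-1)$. Following \cite[Sec.~2--3]{CurienKortchloop}, attach to a c\`adl\`ag excursion $w$ a pseudo-metric $\mathfrak{d}_w$ on $[0,1]$: heuristically, each upward jump of $w$ of height $h$ becomes a circle of perimeter $\sim h$, the circles are glued along the genealogy encoded by $w$, and $\mathfrak{d}_w(s,t)$ is the length of the shortest path from $s$ to $t$ that goes around each straddling circle along its shorter arc; by the very definition of $\mathcal{L}_\alpha$ one has $\mathcal{L}_\alpha=\big([0,1]/\{\mathfrak{d}_{X^{exc,\alpha}}=0\},\mathfrak{d}_{X^{exc,\alpha}}\big)$. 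Applying the same recipe to the discrete path $W(\tau_n)$ (whose increment at step $i$ is $k_{v(i)}-1$), one should check the metric comparison
$$\big|\, d_{\Loop(\tau_n)}\!\big(v(i),v(j)\big)-\mathfrak{d}_{W(\tau_n)}(i,j)\,\big|\ \le\ C\,\Height(\tau_n)\qquad(0\le i,j\le\zeta(\tau_n)),$$
where the error term collects the bounded-degree vertices of $\tau_n$: the functional effectively ignores them, whereas each affects a looptree geodesic by at most a bounded number of edges, and a looptree geodesic crosses at most $2\,\Height(\tau_n)$ generations of $\tau_n$. (All quantities are understood after the time/space rescalings of hypothesis~(i), which I suppress in the notation.)

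\textbf{Step 2 (continuity of the functional --- the crux).} The heart of the argument is to show that $w\mapsto\big([0,1]/\{\mathfrak{d}_w=0\},\mathfrak{d}_w\big)$ is continuous for the Gromov--Hausdorff topology at $\mathbb{P}$-almost every realisation of $X^{exc,\alpha}$, the space of excursions being given the Skorokhod topology. The subtlety is that Skorokhod convergence exerts no uniform control on the small jumps, so one must prove that the contribution to $\mathfrak{d}_{W(\tau_n)}$ (after rescaling) of the jumps of size $<\varepsilon$ is, uniformly in $n$ and in the endpoints, arbitrarily small once $\varepsilon$ is small. This is exactly where $\alpha\in(1,2)$ and the stable-domain assumptions are used: a truncation argument together with a maximal inequality for the centred partial sums of the jumps bounds the cumulative size of the small jumps after rescaling, hence their cumulative effect on distances, while the finitely many macroscopic jumps converge in location and in size along the Skorokhod convergence, carrying their circles and genealogical positions with them. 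Combining the two gives the continuity.

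\textbf{Step 3 (conclusion and expected obstacle).} Granted (i) and (ii), I would use Skorokhod's representation theorem to realise, on one probability space, $b_n^{-1}W_{\lfloor\zeta(\tau_n)t\rfloor}(\tau_n)\to X^{exc,\alpha}$ almost surely in the Skorokhod topology while keeping $b_n^{-1}\Height(\tau_n)\to0$ in probability (legitimate, since $\Height(\tau_n)$ is a function of $W(\tau_n)$). Step 2 then gives convergence of the rescaled quotient spaces of $\mathfrak{d}_{W(\tau_n)}$ to $\mathcal{L}_\alpha$ for $d_{\mathrm{GH}}$; Step 1 and (ii) show that $b_n^{-1}\cdot\Loop(\tau_n)$ is within $b_n^{-1}C\,\Height(\tau_n)\to0$ of those spaces; hence $b_n^{-1}\cdot\Loop(\tau_n)\to\mathcal{L}_\alpha$ in probability on this space, which is the asserted convergence in distribution. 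The single real obstacle is the equicontinuity estimate of Step 2 --- controlling the aggregate contribution of the small jumps to the metric --- and it is precisely here that the index restriction $\alpha<2$ and the exact tail hypotheses cannot be relaxed; everything else is bookkeeping with the coding and with the triangle inequality for $d_{\mathrm{GH}}$.
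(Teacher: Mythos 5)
This statement is not proved in the paper at all: it is quoted verbatim as an external result of Curien and Kortchemski \cite{CurienKortchloop} (their invariance principle for discrete looptrees), and the paper only \emph{applies} it, verifying hypotheses (i) and (ii) for the conditioned Galton--Watson trees in Remark \ref{rem:inv} and then combining it with the deterministic comparison Lemma \ref{l:gh_height}. So there is no in-paper proof to compare against; what you have written is a reconstruction of the argument in the cited reference. As such, your outline does follow the actual strategy of \cite{CurienKortchloop}: encode both the discrete looptree and $\mathcal{L}_\alpha$ through (rescaled) Lukasiewicz paths/excursions, show that the looptree distance agrees with a path functional up to an error of order $\Height(\tau_n)$ (which hypothesis (ii) kills after rescaling), pass to an almost-sure Skorokhod coupling via Skorokhod representation, and control the Gromov--Hausdorff distance through a correspondence whose distortion is estimated by splitting jumps into finitely many macroscopic ones (which converge under Skorokhod convergence) and small ones (whose aggregate contribution to distances must be shown to be uniformly negligible).

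There is, however, a genuine flaw in how you propose to close the crux, Step 2. The theorem as stated makes \emph{no} distributional or tail assumptions on the trees $\tau_n$: its only hypotheses are (i) and (ii), for an arbitrary sequence of random trees. Your small-jump control invokes ``the stable-domain assumptions'', ``a maximal inequality for the centred partial sums of the jumps'' and ``the exact tail hypotheses'', none of which are available; in the present paper the domain-of-attraction and criticality assumptions on $\mu$ are used only to \emph{verify} (i) and (ii) for the conditioned $GW_\mu$ trees (via \cite{DuquesneGW} and \eqref{eq:bscaling}), not inside the proof of the invariance principle. The uniform negligibility of the small-jump contributions has to be extracted from hypothesis (i) itself together with almost-sure path properties of the limiting excursion $X^{exc,\alpha}$ (this is where $\alpha\in(1,2)$ enters, through the limit object, and is how \cite{CurienKortchloop} proceed), for instance by exploiting that sums of the ``right portions'' of ancestral jumps are exact increments of the Lukasiewicz path, so that Skorokhod convergence plus the a.s.\ structure of $X^{exc,\alpha}$ controls them. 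Also note that $\Height(\tau_n)$ is indeed a measurable functional of $W(\tau_n)$, so your coupling step is fine; but as written, Step 2 asserts the key equicontinuity estimate rather than proving it, and grounds it on hypotheses the statement does not grant.
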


Let $\mu$ and $\mu_\varnothing$ be given by \eqref{eq:mus} and chosen such that they are probability measures satisfying the assumptions of Theorem \ref{mainresult1}. First of all, we apply this invariance principle to   $\mathcal{T}^w_n = \mathcal{S}(\phi_n(\mathcal{H}_n^{w,\star}))$, the shape of the weak dual of $\mathcal{H}_n^{w,\star}$ which by Proposition \ref{rem:G-W} is distributed as $\GW^{\mu,\mu_\varnothing}_n$. Condition (i) is satisfied for $\mathcal{T}^w_n$ by \cite[Theorem 21]{Noncrossing} whose proof involves comparing $\GW^{\mu,\mu_\varnothing}_n$ with $\GW^{\mu}_n$ and using that (i) holds for the latter by standard results \cite{DuquesneGW}. Property (ii) holds since the height of $\mathcal{T}^w_n$ is of order $n/b_n$ and that $b_n$ is regularly varying with index $\alpha < 2$. The former statement follows again from comparing $\GW^{\mu,\mu_\varnothing}_n$ with $\GW^{\mu}_n$, see the discussion in \cite[Remark 26]{Noncrossing}. We therefore have
\begin{align*}
	\frac{1}{b_n} \cdot \mathsf{Loop}(\mathcal{T}^w_n) \xrightarrow[n \rightarrow \infty]{\enskip (d) \enskip} \mathcal{L}_{\alpha}. 
\end{align*}
Finally, we can bound the Gromov--Hausdorff distance between $\mathsf{Loop}(\mathcal{T}_n^w)$ and $\mathcal{H}^{w,\star}_n$. By \cite[Equation (4.10)]{CurienKortchloop} it holds deterministically (by a short argument) that the Gromov--Hausdorff distance between a dissection and the looptree of its weak dual tree is roughly bounded by its height. More precisely we have
\begin{align*}
\frac{1}{b_n}d_{\mathrm{GH}}(\mathcal{H}^{w,\star}_n,\mathsf{Loop}(\mathcal{T}_n^w)) \leq \frac{1}{b_n}\left(\Height(\mathcal{T}^w_n) + 2\right).
\end{align*}
The expression on the right hand side converges to 0 in probability since $\mathcal{T}^w_n$ satisfies (ii) in Theorem \ref{invariance}, which concludes the proof.

\begin{flushright} $\square$ \end{flushright}

\subsection{Proof of Theorem \ref{mainresultCRT2}} \label{ss:proof3}
Recall that the random NCO can in this case be sampled as described in Lemma \ref{l:coupling}. An underlying tree is first sampled according to branching weights $\eta$ given by $\eta(j) = Z_j^{\mu,seq}$. Each vertex of the underlying tree is then replaced by an i.i.d.~ copy of a finite sequence of $w$-face weighted dissections of the appropriate size. If the conditions in Theorem \ref{mainresultCRT2} are satisfied then, by Lemma \ref{nuetaconditions}, $\nu_\eta > 1$ which means that we are in case I) in \eqref{criteria}. This means that the branching weights $\eta$ of the underlying tree have a finite exponential moment, a condition which is referred to as Ia) in \cite{stufler:2020}. In addition, some very mild conditions on the decorations \cite[Conditions 1,2 and 3, page 83]{stufler:2020} are required which is easy to verify in the current case. We may then apply \cite[Theorem 6.60]{stufler:2020} which completes the proof. 

\begin{flushright} $\square$ \end{flushright}

\subsection{Proof of Theorem \ref{mainresult2}} \label{ss:proof4}
As in the previous subsection we rely on the description of the NCO as a decorated tree. In order to prove the theorem we apply the recent general invariance principle for decorated stable trees given in \cite[Theorem 5.1]{decoratedtrees}. In order to apply this result we need to show that Conditions D1, D2, T1, T2 and B1 in \cite[Section 3.1]{decoratedtrees} are satisfied (the other conditions stated there are only needed when measures are also considered). Since their statements are lengthy and require the introduction of more notation we will let it suffice to refer to them here. In the current case it trivially holds that
\begin{align} \label{eq:diamdiscrete}
\diam(\mathcal{SD}^w_n) \leq n/2.
\end{align}
Denote the circle of unit circumference by $C^1$. Then by definition
\begin{align} \label{eq:diamcont}
\diam(C^1) = 1/2.
\end{align}	
It will be sufficient to prove the two lemmas
\begin{lemma}\label{item1}
	The weights $\eta$ of the underlying tree are equivalent, in the sense of \eqref{eq:tilt1}, to a probability measure $\hat \eta$ which is critical and belongs to the domain of attraction of an $\alpha$-stable law, $\alpha \in (1,2)$. In particular, 
	\begin{align*}
	 	\hat \eta (j) \sim \frac{g^\mu(r) B_\mu}{1-A_\mu} L(j)  ((1-\nu_\mu)j)^{-\alpha-1}
	\end{align*}
	where $L$ is the same slowly varying function as in \eqref{muweights}.
\end{lemma}
\begin{lemma}\label{item2}
The following convergence holds in distribution in the Gromov-Hausdorff sense
\begin{align*}
n^{-1}\cdot \mathcal{SD}^w_n \to (1-\nu_\mu) \cdot C^1.
\end{align*}
\end{lemma}
Lemma \ref{item1} implies that T1 and T2 hold and along with \eqref{eq:diamdiscrete} it further implies that condition B1 holds (see \cite[Proposition 3.2 with $\gamma=1$ and Section 3.3]{decoratedtrees}). Lemma \ref{item2} implies that D1 holds and along with \eqref{eq:diamcont} it further implies that D2 holds.

These Lemmas give a rough idea for the validity of Theorem \ref{mainresult2}. By Lemma \ref{item1}, it holds that 
\begin{align*}
\hat \eta (j,\infty) \sim \frac{g^\mu(r) B_\mu}{\alpha(1-A_\mu)} L(j)  ((1-\nu_\mu)j)^{-\alpha-1}
\end{align*}
and therefore $\hat \eta$ belongs to the domain of attraction of a stable law with index $\alpha$. Thus, the finite underlying tree will converge towards an $\alpha$-stable tree once rescaled by
\begin{align*}
C_n &= (\alpha|\Gamma(-\alpha)|)^{1/\alpha}\inf\{x\geq 0~:~\hat\eta(x,\infty)\leq 1/n\} \\ &\sim  \left(\frac{|\Gamma(-\alpha)|g^\mu(r) B_\mu}{(1-\nu_\mu)(1-A_\mu)} \right)^{1/\alpha} (1-\nu_\mu)^{-1} (L(n)n)^{1/\alpha},
\end{align*}
see e.g.~\cite[Proof of Lemma 5.1]{stefansson:2019} for similar calculations.
By Lemma \ref{item2}, a typical large decoration converges towards a circle of circumference $1-\nu_\mu$. Hence, up to some technical details which are taken care of in the other conditions, the decorated tree converges towards a multiple $(1-\nu_\mu)$ of an $\alpha$-stable looptree. Combining these results gives the correct scaling of distances in the decorated trees in the theorem
\begin{align*}
	c_n = (1-\nu_\mu)C_n.
\end{align*}

\begin{proof}[Proof of Lemma \ref{item1}]
Define the probability measure 
\begin{align*}
	\hat \eta(j) = \frac{Z_j^{\mu,seq} R_\mu^j}{\mathcal{Z}^{\mu,seq}(R_\mu)}
\end{align*}
which is a tilt of $\eta(j) = Z_j^{\mu,seq}$.
The conditions of the Theorem state that $\mu$ satisfies $\nu_\mu < 1$ and
	\begin{align*}
	\mu(k) = L(k) k^{-\alpha-1} r^{-k}
\end{align*}
with $\alpha \in (1,2)$, $r=\rho_\mu$ and $L$ slowly varying, and that
	\begin{align*}
	\frac{A_\mu(1-\nu_\mu)+\rho_\mu B_\mu}{(1-A_\mu)(1-\nu_\mu)} = 1.
\end{align*}
In this case it holds that
\begin{align}
\mathcal Z^\mu(R_\mu) = r = \rho_\mu < 1 \qquad \text{and}\qquad \mathcal Z_\varnothing^\mu(R_\mu) = r \frac{g_\varnothing^\mu(r)}{g^\mu(r)} = A_\mu < 1.
\end{align}
In the following, we will use the notation $[z^j]\left\{ \sum_{j=0}^\infty a_n z^n \right\} = a_j$. Recall the expression \eqref{cond_partition} for $Z_n^\mu = [z^n]\{\mathcal{Z}^\mu(z)\}$. By \eqref{treeformula2} we have
\begin{align*} 
[z^j]\left\{\mathcal{Z}^\mu_\varnothing(z)\right\} &= 	  [z^{j-1}]\left\{g_\varnothing^\mu (\mathcal{Z}^\mu(z))\right\} \\
&\sim (g_\varnothing^\mu)'(\rho_\mu)  [z^{j-1}]\left\{\mathcal{Z}^\mu(z)\right\}   = R_\mu^{-1} B_\mu  [z^{j-1}]\left\{\mathcal{Z}^\mu(z)\right\}
\end{align*}
where we used \cite[Theorem 4.30]{foss} in the second last step. In the same way, we find that

\begin{align}
	\eta(j) = Z_j^{\mu,seq} &= [z^j]\{\mathcal{Z}^{\mu,seq}(z)\} = [z^j]\left\{\frac{1}{1-\mathcal{Z}^\mu_\varnothing(z)}\right\} \nonumber\\
	&\sim \frac{1}{(1-A_\mu)^2} [z^j] \{\mathcal{Z}_\varnothing^\mu(z)\} \nonumber\\
	&\sim  \frac{B_\mu}{R_\mu(1-A_\mu)^2}[z^{j-1}]\left\{\mathcal{Z}^\mu(z)\right\} \nonumber\\
	&\sim  \frac{r B_\mu}{(1-A_\mu)^2} L(j) R_\mu^{-j-1} ((1-\nu_\mu)j)^{-\alpha-1} \label{etaasym}
\end{align}
where in the last step we used $[z^{j-1}]\mathcal{Z}^\mu(z) = Z_{j-1}^\mu$ and \eqref{cond_partition}.
Then
\begin{align*}
	\hat \eta (j) \sim  \frac{r B_\mu}{R_\mu (1-A_\mu)} L(j)  ((1-\nu_\mu)j)^{-\alpha-1} = \frac{g^\mu(r) B_\mu}{(1-A_\mu)} L(j)  ((1-\nu_\mu)j)^{-\alpha-1}
\end{align*}
which shows that $\hat \eta$ is in the domain of attraction of a stable law with index $\alpha$. Finally, we calculate using \eqref{Zprime},
\begin{align*}
\sum_{j=0}^\infty j \hat \eta(j) &=  \frac{R_\mu(\mathcal{Z}^{\mu,seq})'(R_\mu) }{\mathcal{Z}^{\mu,seq}(R_\mu)} = \frac{R_\mu}{1-A_\mu}\left( g^\mu_\varnothing(r) + \frac{r (g_\varnothing^\mu)'(r)}{1-\nu_\mu}\right) \\
&= \frac{ A_\mu (1-\nu_\mu) + r B_\mu}{(1-A_\mu)(1-\nu_\mu)} =1
\end{align*}
which shows that $\hat \eta$ is critical.
\end{proof}

\begin{proof}[Proof of Lemma \ref{item2}] Since $\nu_\mu < 1$, the decorations, consisting of a finite sequence of NCDs, are in a so-called condensation phase, see e.g.~\cite{janson,jonsson, kortchemski}. This means that exactly one face, call it $f_{\max}$, will have a size close to $(1-\nu_\mu)n$  where $n$ is the total size of the NCDs in the sequence, and all other faces are uniformly small. The geometry of a large NCD may then be shown to be close to that of a circle, see Fig.~\ref{f:condensation}. We now make this more precise.  

 \begin{figure}[ht]
	\centering
	\resizebox{1.\textwidth}{!}{
		\includegraphics[width=0.95cm]{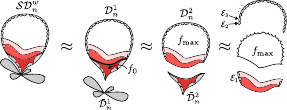}
	}  
	\caption{The figure demonstrates how one may show in three steps that $\mathcal{SD}_n^w$ is close to the circle graph $f_{\max}$ of size approximately $(1-\nu_\mu)n$, in the Gromov-Hausdorff sense.  The shaded regions represent small dissections. \label{f:condensation}}
\end{figure}

Denote the largest element of the finite sequence $\mathcal{SD}_n^w$	by $\mathcal{D}^{1}_n$ (in case of a tie, let it be the first largest one) and let $\bar{\mathcal{D}}^{1}_n$ denote the remaining part. Denote the root face of $\mathcal{D}^1_n$ by $f_0$. One may decompose $\mathcal{D}^1_n$ into the face $f_0$ and a  number of sub-dissections which have an edge in common with $f_0$. Denote the largest such sub-dissection by $\mathcal{D}^2_n$ (in case of a tie, choose e.g.~the first one in a clockwise order) and denote the remaining part by $\bar{\mathcal{D}}^{2}_n$. Denote the face in $\mathcal{D}_n^2$ of largest degree by $f_{\max}$. By abuse of notation we also let $f_{\max}$ denote the circle graph consisting of the edges on the boundary of $f_{\max}$. We will show that $\mathcal{SD}_n^w$ is close to $f_{\max}$ by going through the two intermediate dissections $\mathcal{D}^1_n$ and $\mathcal{D}^2_n$, see Fig.~\ref{f:condensation}. Since $\deg(f_{\max})$ is with high probability of size close to $(1-\nu_\mu)n$ this will conclude the proof. More precisely we will show that
\begin{align}
n^{-1}\cdot d_{\mathrm{GH}}(\mathcal{SD}_n^w, \mathcal{D}^1_n)  &\xrightarrow[n \rightarrow \infty]{\enskip (\mathbb{P}) \enskip} 0, \label{conv1}\\
n^{-1}\cdot d_{\mathrm{GH}}(\mathcal{D}^1_n,\mathcal{D}^2_n) &\xrightarrow[n \rightarrow \infty]{\enskip (\mathbb{P}) \enskip} 0, \label{conv2} \\
n^{-1}\cdot d_{\mathrm{GH}}(\mathcal{D}^2_n, f_{\max})  &\xrightarrow[n \rightarrow \infty]{\enskip (\mathbb{P}) \enskip} 0 \qquad \text{and} \label{conv3} \\
d_{\mathrm{GH}}(n^{-1}\cdot f_{\max},(1-\nu_\mu)\cdot C^1) & \xrightarrow[n \rightarrow \infty]{\enskip (\mathbb{P}) \enskip} 0 \label{conv4} 
\end{align}
and the result then follows from the triangle inequality. 

We start by proving \eqref{conv1}.  By Lemma \ref{l:GHestimate}
\begin{align} \label{eq:est1}
d_{\mathrm{GH}}(\mathcal{SD}_n^w, \mathcal{D}^1_n) \leq \diam(\bar{\mathcal{D}}^1_n) \leq |\bar{\mathcal{D}}_n^1|,
\end{align}
where $|G|$ denotes the number of vertices in the graph $G$. Since $\mathcal{Z}^\mu_\varnothing(R_\mu) < 1$, it follows from \eqref{subcritical_scheme} and a general result on Gibbs-partitions with a subcritical composition scheme \cite[Theorem 3.1]{stufler-gibbs} that  $|\bar{\mathcal{D}}_n^1|$ converges in total variation towards a finite random variable which by \eqref{eq:est1} concludes the proof of \eqref{conv1}. It also follows that $\frac{|\mathcal{D}^1_n|}{n} \to 1$ in probability.
 
By Lemma \ref{l:GHestimate}
 \begin{align} \label{eq:est2}
d_{\mathrm{GH}}(\mathcal{D}_n^1, \mathcal{D}^2_n) \leq \diam(\bar{\mathcal{D}}^2_n) \leq |\bar{\mathcal{D}}_n^2|.
 \end{align}
Let $\hat \mu$ denote the probability distribution with mean $\nu_\mu$ obtained by tilting the weights $\mu$ and choose $\hat \mu_\varnothing$ by simultaneously tilting $\mu_\varnothing$ according to \eqref{eq:mus}. Again, using  \cite[Theorem 3.1]{stufler-gibbs} it holds that conditionally on $|\mathcal{D}_n^1|$, $\mathcal{D}_n^1$ is distributed as a $w$-weighted NCD and thus the shape of its weak dual is distributed as $\GW_{|\mathcal{D}^1_n|}^{\hat \mu,\hat \mu_{\varnothing}}$. Therefore, since $\frac{|\mathcal{D}_n^1|}{n} \to 1$ in probability, it holds by Lemma \ref{l:cond} that $|\bar{\mathcal{D}}_n^2|$ converges towards a finite random variable when $n\to \infty$ which along with \eqref{eq:est2} proves \eqref{conv2}. It also follows that $\frac{|\mathcal{D}^2_n|}{n} \to 1$ in probability.

Applying Lemma \ref{l:cond} again allows us to conclude that conditionally on $|\mathcal{D}_n^2|$ , $\mathcal{D}_n^2$ is distributed like a NCD whose weak dual has a shape which is a one type Galton-Watson tree $\GW_{|\mathcal{D}^2_n|}^{\hat \mu}$. Let us now decompose $\mathcal{D}^2_n$ into the face $f_{\max}$ and the dissections which share an edge with this face. Denote these dissections by  $(\mathcal{E}_i)_{i=1}^{\deg(f_{\max})}$ in clockwise order from the edge which belonged to $f_0$, see Fig.~\ref{f:condensation}. By a straigtforward generalization of Lemma \ref{l:GHestimate}
\begin{align} \nonumber
	d_{\mathrm{GH}}(\mathcal{D}_n^2,f_{\max}) &\leq \max \{\diam(\mathcal{E}_i)~:~ 1 \leq i \leq \deg(f_{\max})\} \\
	&\leq \max \{|\mathcal{E}_i|~:~ 1 \leq i \leq \deg(f_{\max})\}. \label{eq:est5}
\end{align}
It follows from the bijection with trees and the fact that $\nu_\mu < 1$ that there is a slowly varying sequence $(\tilde L_n)_{n\geq 1}$ such that
\begin{align} \label{eq:est3}
		\max \{|\mathcal{E}_i|~:~ 1\leq i \leq \deg(f_{\max})\} = O_p(\tilde L_n n^{1/\alpha}),
\end{align}
and 
\begin{align} \label{eq:est4}
	\deg(f_{\max}) = (1-\nu_\mu)n + O_p(\tilde L_n n^{1/\alpha})
\end{align}
see \cite[Theorem 1 and Corollary 1]{kortchemski}, where we have used $|\mathcal{D}^2_n|/n \to 1$ in probability to replace $|\mathcal{D}_n^2|$ by $n$. The estimates \eqref{eq:est5} and \eqref{eq:est4} prove \eqref{conv3}.

Finally we prove \eqref{conv4}. We construct a correspondence between $f_{\max}$ and $C^1$ by dividing the latter into $\deg(f_{\max})$ number of arcs of equal length and letting subsequent points of $f_{\max}$ correspond to subsequent arcs of $C^1$. By bounding its distortion we find that
\begin{align*}
	d_{\mathrm{GH}}(n^{-1} \cdot f_{\max},(1-\nu_\mu)\cdot C^1) \leq \left|\frac{\deg(f_{\max})}{n}-(1-\nu_\mu)\right| + O(\deg(f_{\max})^{-1}) \to 0
\end{align*}
in probability as $n\to \infty$ by \eqref{eq:est4} which proves \eqref{conv4}.

%

\end{proof}

\subsection*{Acknowledgement} We thank Thomas Vallier and Thomas Selig for discussions in an early stage of this work. We are grateful for valuable comments from Benedikt Stufler on a revised version of the paper. The research is partially supported by the Icelandic Research Fund, grant number 185233-051.


\begin{thebibliography}{99}
	
\bibitem{BerryBroutinGoldschmidt} 
L. Addario-Berry, N. Broutin and C. Goldschmidt. 
\textit{The continuum limit of critical random graphs}. 
Probab. Theory Related Fields , 152 (2012), no. 3-4, 367–406. MR2892951

\bibitem{Aldous1} 
D. Aldous. 
\textit{The continuum random tree. I}, 
Ann. Probab., 19(1) :1–28, 1991.
	
\bibitem{Aldous2} 
D. Aldous. 
\textit{The continuum random tree. II.  An overview}, 
Stochastic analysis (Durham, 1990), volume 167 of
London Math. Soc. Lecture Note Ser., pages 23–70. Cambridge Univ. Press, Cambridge, 1991.
	
\bibitem{Aldous3} 
D. Aldous. 
\textit{The continuum random tree. III}, 
Ann. Probab., 21(1) :248–289, 1993.

\bibitem{BertoinLevy}
J. Bertoin. 
\textit{L\'evy Process}, 
vol. 121 of Cambridge Tracts in Mathematics, Cambridge University Press, Cambridge, 1996.

\bibitem{Slowly varying} 
N. H. Bingham, C. M. Goldie, and J. L. Teugels. 
\textit{Regular variation}, 
 vol. 27 of Encyclopedia of Mathematics and its Applications, Cambridge University Press, Cambridge, 1989. MR1015093.

\bibitem{BouFranGui} 
 J. Bouttier, P. Di Francesco, and E. Guitter. 
 \textit{Planar maps as labeled mobiles}, 
 Electron. J.Combin., 11:no. 69, 27 pp. (electronic), 2004.

\bibitem{caraceni}
A.~Caraceni. \emph{The scaling limit of random outerplanar maps}. Ann.~Inst.~H.~Poincaré Probab.~Statist.~52(4), (2016), 1667-1686.


\bibitem{metricgeo} 
 D. Burago, Y. Burago and S. Ivanov. 
 \textit{ A Course in Metric Geometry}, 
 AMS Graduate Studies in Math, vol. 33. Am. Math. Soc., Providence (2001).
 
\bibitem{CoriVau} 
R. Cori and B. Vauquelin. 
\textit{Planar maps are well labeled trees}, 
Canad. J. Math., 33(5):1023–1042, 1981.

\bibitem{randissection} 
N. Curien, B. Haas, and I. Kortchemski. 
\textit{The CRT is the scaling limit of random
	dissections}, 
Random Structures Algorithms, 47 (2015), pp. 304–327.


\bibitem{CurienKortchloop} 
N. Curien and  I. Kortchemski. 
\textit{Random stable looptrees}, 
Electron. J. Probab., 19 (2014), no. 108, 1-35.

\bibitem{DuquesneGW} 
T. Duquesne. 
\textit{A limit theorem for the contour process of conditioned Galton-Watson trees}, 
Ann. Probab.,  31 (2003), pp. 996–1027. MR-1964956.

\bibitem{DuqGalllevypro} 
T. Duquesne and J.-F. Le Gall. 
\textit{Random trees, L\'evy processes and spatial branching processes}, 
Ast\'erisque,(2002), pp. vi+147. MR-1954248.

\bibitem{Chapuy}  G.~Chapuy. \emph{The structure of unicellular maps, and a connection between maps of positive genus and
planar labelled trees,} Probability Theory and Related Fields 147 (2010), no. 3-4, 415–447.

\bibitem{foss} S.~Foss, D.~Korshunov, and S.~Zachary, \emph{An introduction to heavy-tailed and subexponential distributions},
Springer Series in Operations Research and Financial Engineering, Springer, New York, second ed., 2013.

\bibitem{han}R.~Han and L.~Yanpei. \emph{Enumeration of rooted planar halin maps,} Applied Mathematics-A Journal of Chinese Universities \textbf{14}, (1999), pp 117–121. 


\bibitem{janson} S.~Janson. \emph{Simply generated trees, conditioned Galton-Watson trees, random allocations and condensation},
Probab. Surv., 9 (2012), pp. 103–252


\bibitem{jonsson} T.~Jonsson and S.~O.~Stefansson. \emph{Condensation in nongeneric trees}, J. Stat. Phys., 142 (2011), pp. 277–313.



\bibitem{Kortchinva} 
I. Kortchemski. 
\textit{Invariance principles for Galton-Watson trees conditioned on the number leaves}, 
Stochastic Process. Appl., 122 (2012), pp. 3126-3172.

\bibitem{kortchemski} I.~Kortchemski. \emph{Limit theorems for conditioned non-generic Galton–Watson trees}, Ann.~Inst.~H.~Poincaré Probab.~Statist.~51(2), (2015) 489-511.

\bibitem{Noncrossing} 
 I. Kortchemski and C. Marzouk. \textit{Triangulating stable laminations}, 
Electron. J. Probab. 21(11), 1–31(2016).

\bibitem{kortchemskirichier}
I.~Kortchemski and L.~Richier. \emph{The boundary of random planar maps via looptrees}, Ann.~Fac.~Sci.~Toulouse Math, 6, \textbf{29} (2020) no.~2, pp. 391-430


\bibitem{LeGall2} 
J.-F. Le Gall. 
\textit{Random geometry on the sphere}, 
In Proceedings of the ICM, 2014.
	
 
\bibitem{LeGall1} 
J.-F.~Le Gall. 
\textit{ Random trees and applications}. 
Probab. Surv., 322(10):245–311, 1905.


\bibitem{Miermont} 
G.~Miermont. 
\textit{ Aspects of random maps}, 
In Saint-Flour Lecture Notes, 2014.

\bibitem{Schaef} 
G.~Schaeffer. 
\textit{Conjugaison d’arbres et cartes combinatoires al\'eatoires}, 
PhD thesis, Universit\'e Bordeaux I, 1998.

\bibitem{decoratedtrees}
D.~Sénizergues, S.~Stef\'ansson and B.~Stufler. \emph{Decorated stable trees}, preprint, (2022), arXiv:2205.02968.


\bibitem{stefansson:2019} 
S.~Stef\'ansson and B.~Stufler. 
\textit{Geometry of large Boltzmann outerplanar maps}, 
Random Structures and Algorithms,  55 (2019), no.~3, pp.~742–771.  MR-3997486.
 
\bibitem{stufler-gibbs}
B.~Stufler. \emph{Gibbs partitions: The convergent case},  Random Structures and Algorithms, 53 (2018), no.~3, pp.~537-558
 
\bibitem{stufler:2020}
B.~Stufler.
\textit{Limits of random tree-like discrete structures}, 
Probab. Surv., 17 (2020), pp. 318–477.

\bibitem{stufler-gibbs-2}
B.~Stufler. \emph{Gibbs partitions: a comprehensive phase diagram}, preprint, (2022) arXiv:2204.06982.

\end{thebibliography}
\end{document}